\newtheorem{thm}{Theorem}[section]
\newtheorem{lem}[thm]{Lemma}
\newtheorem{prop}[thm]{Proposition}
\newtheorem{cor}[thm]{Corollary}
\theoremstyle{definition} 
\newtheorem{defn}[thm]{Definition}
\theoremstyle{remark} 
\newtheorem{rem}[thm]{Remark}
\newtheorem{eg}[thm]{Example}
\newcommand{\ric}{\mathrm{Ric}}
\newcommand{\bbr}{\mathbb{R}}
\newcommand{\bbs}{\mathbb{S}}
\newcommand{\abs}[1]{\lvert #1 \rvert}
\newcommand{\owedge}{\mathbin{\bigcirc\mspace{-15mu}\wedge\mspace{3mu}}}
\newcommand{\riem}{\mathrm{Rm}}
\begin{document}

\title[Characterizations of umbilic hypersurfaces]{Characterizations of umbilic hypersurfaces in warped product manifolds}

\author{Shanze Gao}
\author{Hui Ma}

\address{Department of Mathematical Sciences, Tsinghua University, Beijing {\rm 100084}, P.R. China}
\email{shanze\_gao@163.com, ma-h@mail.tsinghua.edu.cn}

{\begin{abstract}
We consider closed orientable hypersurfaces in a wide class of warped product manifolds 
which include space forms, deSitter-Schwarzschild and Reissner-Nordstr\"{o}m manifolds. By using a new integral formula or Brendle's Heintze-Karcher type inequality, 
we present some new characterizations of umbilic hypersurfaces.
These results can be viewed as generalizations of the classical Jellet-Liebmann theorem and the Alexandrov theorem in Euclidean space.
\end{abstract}}

\subjclass[2010]{53C44, 53C40, 52A20}
\keywords{umbilic, $k$-th mean curvature, warped products}
\thanks{This work was supported by National Natural Science Foundation of China (Grant No. 11671223 and Grant No. 11831005).}

\maketitle

%%%%%%%%%%%%%%
\section{Introduction}

The characterization of hypersurfaces with constant mean curvature in warped product manifolds has attracted much attention recently. There are at least three types of results.
Classical Jellet-Liebmann theorem, also referred to as the Liebmann-S\"{u}ss theorem, asserts that
any closed star-shaped (or convex) immersed hypersurface in Euclidean space with constant mean curvature is a round sphere. 
This has been generalized to a class of  warped products by Montiel \cite{Montiel99}.  
Similar results are also obtained for hypersurfaces with constant higher order mean curvature or Weingarten hypersurfaces in warped products (see \cite{Alias-Impera-Rigoli13,Brendle-Eichmair,Wu-Xia}).

The classical Alexandrov theorem states that any closed embedded hypersurface of constant mean curvature in Euclidean space is a round sphere.  
This was generalized to a class of warped product manifolds by Brendle \cite{Brendle13}. 
The key step in his proof is the Minkowski type formula and a Heintze-Karcher type inequality, which 
 also works for Weingarten hypersurfaces (c.f. \cite{Brendle-Eichmair,Wu-Xia}). In \cite{Kwong-Lee-Pyo}, Kwong-Lee-Pyo proved Alexandrov type results for closed embedded hypersurfaces with radially symmetric higher order mean curvature in a class of warped products.

From a variational point of view, hypersurfaces of constant mean curvature in a Riemannian manifold are critical points of the area functional under variations preserving a certain enclosed volume (see \cite{Barbosa-doCarmo,Barbosa-doCarmo-Eschenburg}). Under the assumption of stability, constant mean curvature hypersurfaces in warped products are studied in \cite{Montiel98,Veeravalli} etc.

In this paper,  we prove Jellet-Liebmann type theorems and an Alexandrov type theorem for certain closed hypersurfaces including constant mean curvature hypersurfaces in some class of warped product manifolds.
Throughout this paper, we assume that $\bar{M}^{n+1}=[0,\bar{r})\times_{\lambda}P^{n}$ $(0<\bar{r}\leq \infty)$ is a warped product manifold endowed with a metric 
$$\bar{g}=dr^{2}+\lambda^{2}(r)g^{P},$$
where  $(P,g^{P})$ is an $n$-dimensional closed Riemannian manifold ($n\geq 2$) and $\lambda: [0,\bar{r})\rightarrow [0,+\infty)$ is a smooth positive function, called the warping function.
We first consider a hypersurface $x:M^{n}\rightarrow \bar{M}^{n+1}$ immersed in a warped product $\bar{M}^{n+1}$ whose  mean curvature $H$ satisfies 
\begin{equation}\label{Hphi}
H=\phi(r),
\end{equation}
where $\phi(r)=x^{*}(\Phi(r))$ and $\Phi(r)$ is a radially symmetric positive function on $\bar{M}$;
or
\begin{equation}\label{Halph}
H^{-\alpha}=\langle \lambda\partial_{r},\nu \rangle,
\end{equation}
where $\nu$ is a normal vector of $M$ and $\alpha>0$ is a constant.

Notice that hypersurfaces satisfying \eqref{Hphi} are critical points of the area functional under variations preserving a weighted volume (see Appendix \ref{variation}). And hypersurfaces satisfying \eqref{Halph} are self-similar solutions to the curvature flow expanding by $H^{-\alpha}$.  Both of them can be regard as generalizations of constant mean curvature hypersurfaces. 

Our first main results are Jellet-Liebmann type theorems of these hypersurfaces.

\begin{thm}\label{nablaH}
Suppose that $(\bar{M}^{n+1}, \bar{g})$ is a warped product manifold satisfying
$$\ric^{P}\geq (n-1)(\lambda'^{2}-\lambda\lambda'')g^{P},$$ 
and  $x:M\rightarrow \bar{M}$ is an immersion of a closed orientable hypersurface $M^n$ in $\bar{M}$.
If $x(M)$ is star-shaped and satisfies
\begin{equation}\label{H<0}
\langle \nabla H,\partial_{r} \rangle\leq 0,
\end{equation}
then $x(M)$ must be totally umbilic.
\end{thm}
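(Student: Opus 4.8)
The plan is to combine a Minkowski-type integral identity with the Heintze--Karcher type inequality that Brendle established for exactly this class of warped products. Recall that in $\bar M = [0,\bar r)\times_\lambda P$ the vector field $X = \lambda(r)\partial_r$ is conformal Killing with $\bar\nabla X = \lambda' \bar g$, so along a closed hypersurface $M$ with outward unit normal $\nu$ the tangential part of $X$ has divergence
\begin{equation*}
\divg_M X^T = n\lambda' - H\langle X,\nu\rangle .
\end{equation*}
Integrating over $M$ gives the classical Minkowski formula $\int_M \big(n\lambda' - H\langle \lambda\partial_r,\nu\rangle\big)\,d\mu = 0$. On the other hand, Brendle's Heintze--Karcher inequality (valid under the stated Ricci lower bound on $P$, together with the structural hypotheses on $\lambda$ that make $X$ conformal and the level sets of $r$ convex) reads
\begin{equation*}
\int_M \frac{\lambda'}{H}\,d\mu \;\ge\; \frac{1}{n}\int_M \langle \lambda\partial_r,\nu\rangle\,d\mu ,
\end{equation*}
with equality if and only if $M$ is totally umbilic. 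So the first step is to isolate these two ingredients; the second is to show that hypothesis \eqref{H<0} forces equality.

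For the equality-forcing step I would use the star-shapedness and \eqref{H<0} as follows. Star-shaped means $\langle \lambda\partial_r,\nu\rangle = \langle X,\nu\rangle > 0$ everywhere, and since $\lambda' \ge 0$ on the relevant range we also have $\lambda'/H > 0$, so both integrands above are positive and the AM--HM type manipulation is legitimate. Write $u = \langle X,\nu\rangle$. From the Minkowski identity $\int_M n\lambda' \, d\mu = \int_M H u\, d\mu$. I want to compare $\int_M \lambda'/H\,d\mu$ with $\int_M u/n\,d\mu$ and simultaneously use that $\langle\nabla H,\partial_r\rangle \le 0$ means $H$ is non-increasing in $r$ while $u$ is, heuristically, correlated with $r$ being small. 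More precisely, the standard trick (as in the CMC case) is to run the argument that on a star-shaped hypersurface the support function $u$ and the radial function $r$ satisfy a reverse relation, and that $\langle\nabla H,\partial_r\rangle\le 0$ together with the two integral relations squeezes
\begin{equation*}
\int_M \frac{\lambda'}{H}\,d\mu \le \frac{1}{n}\int_M u\,d\mu,
\end{equation*}
which is the reverse of Brendle's inequality; hence equality holds throughout and $M$ is totally umbilic. Concretely I expect one writes $0 \ge \int_M \langle\nabla H,\partial_r\rangle \cdot (\text{something nonnegative})\,d\mu$ and integrates by parts, or one uses that $\int_M (n\lambda'/H - u)\,d\mu$ can be rewritten, via the Minkowski identity, as $\int_M H^{-1}(n\lambda' - Hu)\cdots$ and the monotonicity of $H$ controls the sign.

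The main obstacle, and the step I would spend the most care on, is precisely this sign-chasing that turns \eqref{H<0} into the reversed Heintze--Karcher inequality: one must produce an auxiliary identity relating $\int_M \lambda'/H\,d\mu$, $\int_M u\,d\mu$, and $\int_M \langle\nabla H,\partial_r\rangle(\cdots)\,d\mu$ in which every coefficient has a controlled sign on a star-shaped hypersurface. A natural candidate is a "weighted Minkowski formula": test the conformal-Killing identity for $X$ against the function $\Lambda(r) = \int_0^r \lambda$ rather than against $1$, i.e. compute $\divg_M(\Lambda(r)\,X^T/\lambda)$ or similar, producing a term $\langle\nabla\Lambda,\cdot\rangle = \langle X, \cdot\rangle$ and thereby a factor that interacts with $\langle\nabla H,\partial_r\rangle$ after integration by parts. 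Once the correct test function is found the rest is bookkeeping: the Ricci hypothesis enters only through Brendle's inequality, star-shapedness guarantees positivity of $u$, and \eqref{H<0} delivers the one-sided comparison, so the rigidity statement of Brendle's inequality closes the argument.
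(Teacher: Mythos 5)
Your strategy does not fit the hypotheses of this theorem, and the decisive step is left unproved. The first problem is the reliance on Brendle's Heintze--Karcher inequality. That inequality is established for \emph{embedded} hypersurfaces (its proof flows the boundary of the enclosed region), requires $H>0$ everywhere, and requires the structural conditions (C1)--(C4) on the warping function. None of these is assumed in Theorem \ref{nablaH}: the hypersurface is only immersed, no sign of $H$ is imposed, and the only curvature hypothesis is the lower Ricci bound on the fiber $P$. Indeed, the whole point of separating the Jellet--Liebmann type statements (immersed, star-shaped) from the Alexandrov type Theorem \ref{Hklambda} is that only the latter invokes the Heintze--Karcher inequality, and only under embeddedness and (C1)--(C4). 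So the first ingredient of your plan is simply unavailable here. The second gap is that the ``equality-forcing'' step is not bookkeeping: you never actually derive the reversed inequality $\int_M \lambda'/H\,d\mu \le \frac{1}{n}\int_M u\,d\mu$ from \eqref{H<0}, and the heuristic that $u$ is ``correlated with $r$ being small'' has no pointwise content on a general star-shaped hypersurface.

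What does work --- and your closing guess about testing against $\int_0^r\lambda$ is pointing at it --- is a single divergence identity with a pointwise-signed integrand, with no Heintze--Karcher input at all. Set $\eta=x^*\bigl(\int_0^r\lambda(s)\,ds\bigr)$ and $u=\langle\lambda\partial_r,\nu\rangle$, and integrate $\divg\bigl(\sigma_1\nabla\eta-n\nabla u\bigr)$ over $M$. Using the Codazzi equation this gives
\begin{equation*}
\int_M\Bigl\{-n(n-1)\langle\nabla H,\lambda\partial_r\rangle+\bigl((n-1)\sigma_1^2-2n\sigma_2\bigr)u-n\,\overline{\ric}(\nu,\lambda\partial_r^{\top})\Bigr\}\,d\mu=0 .
\end{equation*}
Each term is nonnegative: the first by \eqref{H<0} and $\lambda>0$; the second by Newton's inequality together with $u>0$ from star-shapedness; and the third because a direct computation with the warped-product Ricci tensor shows
$\overline{\ric}(\nu,\lambda\partial_r^{\top})=-u\bigl(\ric^P(\nu^P,\nu^P)+(n-1)(\lambda\lambda''-\lambda'^2)g^P(\nu^P,\nu^P)\bigr)\le 0$
under the stated hypothesis on $\ric^P$. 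Hence all three terms vanish identically, and the vanishing of the Newton term forces $\kappa_1=\dots=\kappa_n$ at every point, i.e.\ umbilicity. Your proposal as written would at best prove a weaker statement under substantially stronger hypotheses, and even then the reversed inequality remains unestablished.
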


\begin{rem}
If $M$ has constant mean curvature, Theorem \ref{nablaH} reduces to the Jellett-Liebmann type theorem proved by Montiel \cite{Montiel99}.
\end{rem}

Applying Theorem \ref{nablaH} to hypersurfaces satisfying \eqref{Hphi}, we obtain the following
\begin{cor}\label{corHphi}
Under the same assumption of Theorem \ref{nablaH}, if $x(M)$ is star-shaped and satisfies
\begin{equation*}
H=\phi(r),
\end{equation*}
where $\phi(r)=x^{*}(\Phi(r))$ and $\Phi(r)$ is a positive non-increasing function of $r$, then $x(M)$ must be totally umbilic. 
\end{cor}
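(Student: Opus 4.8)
The plan is to obtain Corollary \ref{corHphi} as an immediate consequence of Theorem \ref{nablaH}. Recall that the hypotheses of that theorem are: the ambient curvature condition $\ric^{P}\ge(n-1)(\lambda'^{2}-\lambda\lambda'')g^{P}$; that $x\colon M\to\bar M$ is an immersion of a closed orientable hypersurface; that $x(M)$ is star-shaped; and the differential inequality \eqref{H<0}. The first three are assumed verbatim in the corollary (``under the same assumption of Theorem \ref{nablaH}'' together with star-shapedness), so the only thing to do is to check that $H=\phi(r)$ with $\Phi$ non-increasing forces $\langle\nabla H,\partial_{r}\rangle\le0$ on $M$.

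For this I would compute the tangential gradient of $H$. Write $\rho:=x^{*}(r)$ for the restriction of the radial coordinate to $M$, so that the hypothesis becomes $H=\Phi(\rho)$ with $\Phi$ a non-increasing function on $[0,\bar r)$. The chain rule gives $\nabla H=\Phi'(\rho)\,\nabla\rho$. Since $\bar g(\partial_{r},\partial_{r})=1$ and $\bar g(\partial_{r},Y)=0$ for $Y$ tangent to the $P$-factor, the ambient gradient of the function $r$ is $\partial_{r}$; hence $\nabla\rho=(\partial_{r})^{\top}$, the tangential projection of $\partial_{r}$ along $x$. Because $\nabla H$ is tangent to $M$,
\[
\langle\nabla H,\partial_{r}\rangle=\langle\nabla H,(\partial_{r})^{\top}\rangle=\Phi'(\rho)\,\bigl|(\partial_{r})^{\top}\bigr|^{2}.
\]
Here $\Phi'(\rho)\le0$ since $\Phi$ is non-increasing, while $\bigl|(\partial_{r})^{\top}\bigr|^{2}\ge0$, so $\langle\nabla H,\partial_{r}\rangle\le0$, which is exactly \eqref{H<0}. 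All hypotheses of Theorem \ref{nablaH} are therefore satisfied, and the theorem yields that $x(M)$ is totally umbilic.

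There is essentially no obstacle here: the argument is a one-line reduction once the definition $\phi(r)=x^{*}(\Phi(r))$ is unwound. The only point deserving a moment's attention is that in \eqref{H<0} the symbol $\partial_{r}$ denotes the full ambient radial field rather than its tangential part, but as $\nabla H\in TM$ the two pairings coincide, and the resulting quantity $\Phi'(\rho)\,\bigl|(\partial_{r})^{\top}\bigr|^{2}$ is manifestly non-positive. (If one wishes, $\bigl|(\partial_{r})^{\top}\bigr|^{2}=1-\langle\partial_{r},\nu\rangle^{2}$, but this identity is not needed.)
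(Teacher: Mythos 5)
Your proposal is correct and follows exactly the paper's own argument: the paper likewise computes $\langle\nabla H,\partial_{r}\rangle=\phi'\,\abs{\partial_{r}^{\top}}^{2}\leq 0$ from $H=\phi(r)$ with $\Phi$ non-increasing, and then invokes Theorem \ref{nablaH}. Nothing is missing.
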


\begin{rem}
An Alexandrov type theorem for the above hypersurfaces   under the embeddedness assumption was obtained by Kwong-Lee-Pyo \cite{Kwong-Lee-Pyo}.
\end{rem}

The following example shows that the non-increasing assumption on $\Phi(r)$ is necessary.
\begin{eg}\label{ellip}
Let $\Sigma$ be an ellipsoid given by
\begin{equation*}
\Sigma=\{y\in \bbr^{n+1}|y_{1}^{2}+...+y_{n}^{2}+\frac{y_{n+1}^{2}}{a^{2}}=1\}.
\end{equation*}

The mean curvature of $\Sigma$ is
\begin{equation*}
H=\frac{a}{n\sqrt{a^{2}+1-r^{2}}}(n-1+\frac{1}{a^{2}+1-r^{2}}).
\end{equation*}

It is easy to check
\begin{equation*}
\Phi=\frac{a}{n\sqrt{a^{2}+1-r^{2}}}(n-1+\frac{1}{a^{2}+1-r^{2}})
\end{equation*}
is increasing for $r$.
\end{eg}

Another application of Theorem \ref{nablaH} is about hypersurfaces satisfying \eqref{Halph}.

\begin{cor}\label{corHalph}
Under the same assumption of Theorem \ref{nablaH}, if $x(M)$ is strictly convex and satisfies
\begin{equation*}
H^{-\alpha}=\langle \lambda\partial_{r},\nu \rangle,
\end{equation*}
where $\alpha>0$ is a constant, then $x(M)$ is a slice $\{r_{0}\}\times P$ for some $r_{0}\in (0,\bar{r})$.
\end{cor}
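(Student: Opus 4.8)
The plan is to apply Theorem \ref{nablaH} to conclude first that $x(M)$ is totally umbilic, and then to upgrade this to the statement that $x(M)$ is a slice by feeding the defining equation \eqref{Halph} back into the Codazzi equation. Throughout, write $\rho=r\circ x$ for the radial coordinate restricted to $M$, let $X=\lambda\,\partial_{r}$ be the conformal field (so that $\bar\nabla_{Y}X=\lambda'Y$ for every $Y$), and let $u=\langle X,\nu\rangle$ be the associated support function; since $\partial_{r}=\bar\nabla r$ we have $\partial_{r}^{\top}=\nabla\rho$ and hence $X^{\top}=\lambda\,\nabla\rho$. Recall that a closed strictly convex hypersurface in $\bar{M}$ is necessarily star-shaped, and that its mean curvature is positive; thus \eqref{Halph} is meaningful and gives $u=H^{-\alpha}>0$ on $M$.

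The first step is to verify hypothesis \eqref{H<0} of Theorem \ref{nablaH}. Differentiating $u=\langle X,\nu\rangle$ tangentially, using $\bar\nabla_{Y}X=\lambda'Y$ and the Weingarten equation, yields the familiar identity $\nabla u=\lambda\,W(\nabla\rho)$, where $W$ is the shape operator; hence
\begin{equation*}
\langle\nabla u,\partial_{r}\rangle=\langle\nabla u,\nabla\rho\rangle=\lambda\,\langle W(\nabla\rho),\nabla\rho\rangle\ge0
\end{equation*}
by strict convexity. Since $H=u^{-1/\alpha}$ with $u>0$, we have $\nabla H=-\tfrac{1}{\alpha}u^{-1/\alpha-1}\nabla u$, so $\langle\nabla H,\partial_{r}\rangle\le0$, which is precisely \eqref{H<0}. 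Theorem \ref{nablaH} then shows that $x(M)$ is totally umbilic.

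The second step uses umbilicity to promote this to the slice conclusion. Write $W=\kappa\,\mathrm{Id}$ with $\kappa>0$. On the one hand $\nabla u=\lambda\kappa\,\nabla\rho$; on the other hand \eqref{Halph} makes $u$ a fixed negative power of $\kappa$, so $\nabla u$ is a negative multiple of $\nabla\kappa$, and combining the two gives
\begin{equation*}
\nabla\kappa=-\,c\,\lambda\,\kappa^{\alpha+2}\,\nabla\rho\qquad\text{for some constant }c>0 .
\end{equation*}
Independently, substituting $h=\kappa g$ into the Codazzi equation and tracing gives the standard identity $(n-1)\nabla\kappa=-\big(\overline{\ric}(\cdot,\nu)\big)^{\top}$ on $M$. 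Using the warped-product formulas for $\overline{\ric}$ together with the hypothesis $\ric^{P}\ge(n-1)(\lambda'^{2}-\lambda\lambda'')g^{P}$, one checks that for every $Y$ tangent to $M$,
\begin{equation*}
\overline{\ric}(Y,\nu)=E(Y_{P},N),\qquad E:=\ric^{P}-(n-1)(\lambda'^{2}-\lambda\lambda'')g^{P}\ge0 ,
\end{equation*}
where $Y_{P}$ and $N$ denote the components of $Y$ and of $\nu$ tangent to the fibers $\{r\}\times P$.

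Finally I would compare the two formulas on the vector $\nabla\rho$. The first gives $\langle\nabla\kappa,\nabla\rho\rangle=-c\,\lambda\,\kappa^{\alpha+2}\,\abs{\nabla\rho}^{2}\le0$, with equality only where $\nabla\rho=0$. For the second, the decomposition $\nabla\rho=(1-u^{2}/\lambda^{2})\,\partial_{r}-(u/\lambda)\,N$ shows that the fiber part of $\nabla\rho$ equals $-(u/\lambda)N$, so
\begin{equation*}
(n-1)\langle\nabla\kappa,\nabla\rho\rangle=-E\big(-(u/\lambda)N,\,N\big)=\tfrac{u}{\lambda}\,E(N,N)\ge0 ,
\end{equation*}
since $u>0$ and $E\ge0$. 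Hence $\langle\nabla\kappa,\nabla\rho\rangle\equiv0$, which forces $\nabla\rho\equiv0$; as $M$ is connected this means $\rho\equiv r_{0}$ for some $r_{0}\in(0,\bar{r})$, i.e. $x(M)$ is the slice $\{r_{0}\}\times P$. I expect the main obstacle to be the middle computation: one must evaluate $\overline{\ric}(\cdot,\nu)$ on $TM$ precisely and verify that the sign it forces (through the Ricci lower bound) is exactly opposite to the one forced by strict convexity together with \eqref{Halph}, so that the two can hold simultaneously only when $\nabla\rho$ vanishes — carrying all curvature and sign conventions consistently through this step is the delicate part, while reducing ``strictly convex'' to ``star-shaped'' is a standard side issue.
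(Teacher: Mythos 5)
Your first step coincides with the paper's: from $u=H^{-\alpha}>0$ (which is already the star-shapedness $\langle\partial_r,\nu\rangle>0$, so you do not need the separate, and in a general warped product not obviously true, claim that strict convexity implies star-shapedness) and $\nabla u=\lambda W(\nabla\rho)$ with $W>0$, one gets $\langle\nabla H,\partial_r\rangle\le 0$ and Theorem \ref{nablaH} applies. Where you diverge is in upgrading umbilicity to the slice conclusion. The paper's route is shorter: since every term in the integral identity \eqref{k=1} has a sign, the proof of Theorem \ref{nablaH} actually forces $\langle\nabla H,\partial_r\rangle\equiv 0$ pointwise, and then the explicit formula
\begin{equation*}
\langle\nabla H,\partial_{r}\rangle=-\tfrac{1}{\alpha}\,u^{-\frac{1}{\alpha}-1}\,\lambda\sum_i\kappa_{i}\langle e_{i},\partial_{r}\rangle^{2}
\end{equation*}
with all $\kappa_i>0$ immediately gives $\partial_r^{\top}=0$. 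You instead pass through the traced Codazzi identity $(n-1)\nabla\kappa=-\bigl(\overline{\ric}(\cdot,\nu)\bigr)^{\top}$ for the now-umbilic hypersurface and play the resulting sign $\tfrac{u}{\lambda}E(N,N)\ge 0$ against $-\tfrac{1}{\alpha}\lambda\kappa^{\alpha+2}\abs{\nabla\rho}^{2}\le 0$. I checked that with the paper's curvature conventions (Codazzi in the form $\nabla_i h_{pq}=\nabla_q h_{pi}+\bar R_{\nu pqi}$ and $\overline{\ric}(X,Y)=\sum_A\overline{\riem}(X,e_A,Y,e_A)$) the sign does come out as you need, and your identity $\overline{\ric}(Y,\nu)=E(Y_P,N)$ agrees with the computation of $\overline{\ric}(\nu,\lambda\partial_r^{\top})$ in the proof of Theorem \ref{nablaH}; so your argument is correct. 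What the paper's approach buys is that it sidesteps exactly the sign-convention delicacy you flag, and it reuses a fact already established inside the proof of Theorem \ref{nablaH} rather than invoking a new identity; what your approach buys is a self-contained rigidity statement for umbilic solutions of \eqref{Halph} that does not require reopening the proof of Theorem \ref{nablaH}.
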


Similar results hold for higher order mean curvature under stronger assumptions. Let $\sigma_{k}(\kappa)$ denote the $k$-th elementary symmetric polynomial of principal curvatures $\kappa=(\kappa_{1},...,\kappa_{n})$ of $x(M)$, i.e.,
\begin{equation*}
\sigma_{k}(\kappa)=\sum_{1\leq i_{1}<...<i_{k}\leq n}\kappa_{i_{1}}\cdots\kappa_{i_{k}}.
\end{equation*}
Thus, the $k$-th mean curvature is given by $H_{k}=\frac{1}{\binom{n}{k}}\sigma_{k}(\kappa)$. A hypersurface $x(M)$ is $k$-convex, if, at any point of $M$, principal curvatures $$\kappa\in\Gamma_{k}:=\{\mu\in\bbr^{n}|\sigma_{i}(\mu)>0, \text{ for }1\leq i\leq k\}.$$

\begin{thm}\label{nablaHk}
Suppose that $\bar{M}^{n+1}=[0,\bar{r})\times_{\lambda} P^{n}$ is a warped product manifold,  
where $(P,g^{P})$ is a closed Riemannian manifold with constant sectional curvature $\epsilon$ and
\begin{equation}\label{C4>=}
\frac{\lambda(r)''}{\lambda(r)}+\frac{\epsilon-\lambda(r)'^{2}}{\lambda(r)^{2}}\geq 0.
\end{equation}
Let $x:M\rightarrow \bar{M}$ be an immersion of a closed orientable hypersurface $M^n$ in $\bar{M}$.
For any fixed $k$ with $2\leq k\leq n-1$, if $x(M)$ is $k$-convex, star-shaped and satisfies
\begin{equation*}
\langle \nabla H_{k},\partial_{r} \rangle\leq 0,
\end{equation*}
then $x(M)$ must be totally umbilic.
\end{thm}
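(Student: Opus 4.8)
The plan is to mimic the structure of the proof of Theorem \ref{nablaH}, replacing the role of the mean curvature $H$ with the normalized $k$-th mean curvature $H_k$, and the Laplacian with the Newton-tensor divergence operator $\square_{k-1} = \mathrm{div}(T_{k-1}\nabla\,\cdot\,)$, where $T_{k-1}$ is the $(k-1)$-th Newton tensor of the second fundamental form. The two pillars one expects the proof to rest on are: (i) a higher-order Minkowski (Hsiung–Minkowski) type formula in the warped product $\bar M$, which relates $\int_M \lambda' H_{k-1}$ to $\int_M \langle \lambda\partial_r,\nu\rangle H_k$ — such formulas are available under the curvature hypotheses of Theorem \ref{nablaHk} because $(P,g^P)$ has constant curvature $\epsilon$, so $\bar M$ has the special structure needed for the Newton tensors to be divergence-friendly; and (ii) a pointwise algebraic inequality of Newton–MacLaurin type, valid on the $k$-convex cone $\Gamma_k$, comparing $H_{k-1}H_{k+1}\le H_k^2$ and $H_1 H_k \ge H_{k+1}$ etc., with equality characterizing umbilic points.

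**The key steps**, in order, would be: first, introduce the potential-type function obtained by solving $\lambda(r) = \bar\lambda'(r)$ — concretely the function $f$ with $\nabla f = \lambda\partial_r$ (a conformal vector field on $\bar M$), whose restriction to $M$ satisfies $\nabla^M f = \lambda\partial_r^\top$ and $\hess^M f = \lambda'\,g - \langle\lambda\partial_r,\nu\rangle\,h$, where $h$ is the second fundamental form. Second, apply $T_{k-1}$ to this Hessian identity and take the trace: using $\mathrm{tr}(T_{k-1}) = c_{n,k}H_{k-1}$ and $\mathrm{tr}(T_{k-1}h) = c_{n,k}'H_k$ gives $\square_{k-1} f = a\lambda' H_{k-1} - b\langle\lambda\partial_r,\nu\rangle H_k$ for positive constants $a,b$. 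Third — and this is where condition \eqref{C4>=} enters — one must control $\mathrm{div}(T_{k-1})$, which is not zero in a general ambient manifold but is expressible through the ambient curvature tensor; the constant-curvature hypothesis on $P$ together with \eqref{C4>=} makes the relevant curvature term have a favorable sign when contracted against $\nabla f = \lambda\partial_r$. Fourth, integrate $\square_{k-1} f$ over the closed $M$ to get $0 = \int_M\big(a\lambda' H_{k-1} - b\langle\lambda\partial_r,\nu\rangle H_k\big) + (\text{curvature correction})$, i.e. a Minkowski formula. Fifth, combine this with the hypothesis $\langle\nabla H_k,\partial_r\rangle\le 0$ and star-shapedness ($\langle\lambda\partial_r,\nu\rangle>0$): integrating $\langle\lambda\partial_r,\nu\rangle H_k$ against a second Minkowski identity (one level up, involving $H_{k+1}$), or running a second integration by parts that moves a derivative onto $H_k$, should produce an inequality chain that, after invoking the Newton–MacLaurin inequalities for $\kappa\in\Gamma_k$, collapses to an equality, forcing $\kappa_1=\dots=\kappa_n$ pointwise.

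**The main obstacle**, as in most higher-order results of this kind, is step three: controlling the divergence of the Newton tensor and verifying that the ambient-curvature term it generates has the right sign under hypothesis \eqref{C4>=}. In the first-order case ($k=1$, Theorem \ref{nablaH}) this term is absent — $T_0 = \mathrm{id}$ is parallel — which is why only the Ricci lower bound $\ric^P \ge (n-1)(\lambda'^2 - \lambda\lambda'')g^P$ is needed there; for $k\ge 2$ one genuinely needs the full sectional-curvature information, hence the constant-$\epsilon$ assumption on $P$. A secondary technical point is ensuring the positivity/ellipticity needed to run the Newton–MacLaurin comparisons: $k$-convexity guarantees $H_1,\dots,H_k>0$ but not $H_{k+1}>0$, so the argument must be arranged so that $H_{k+1}$ only ever appears on the side of an inequality where its sign is harmless (or is itself controlled via $H_k H_1 \ge H_{k+1}$ and $H_{k-1}>0$). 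Once those two points are in place, the concluding equality-case analysis of the Newton–MacLaurin inequality is routine and yields total umbilicity.
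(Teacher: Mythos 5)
Your proposal is essentially the paper's own argument: the key identity (Lemma \ref{key}) is precisely $\int_M \divg\bigl(k\sigma_k\nabla\eta - n\tfrac{\partial\sigma_k}{\partial h_{ij}}\nabla_j u\bigr)\,d\mu=0$ with $\eta=\int_0^r\lambda$, which, since $\nabla u=h(\nabla\eta)$, is exactly the combination you describe of a Hsiung--Minkowski identity with an integration by parts that moves a derivative onto $\sigma_k$, the divergence of the Newton tensor being computed from the Codazzi equation (Proposition \ref{div}) and given a favorable sign by the constant-curvature form of $\overline{\riem}$, condition \eqref{C4>=}, star-shapedness and $k$-convexity, just as you anticipate. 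The only adjustments: the operative Newton tensor is $T_k=\partial\sigma_{k+1}/\partial h$ rather than $T_{k-1}$, so the Minkowski formula involved is the one relating $H_k$ and $H_{k+1}$ and the argument closes with $H_1H_k\ge H_{k+1}$ on $\Gamma_k$; and of the two alternatives you offer in your final step, only the second (the extra integration by parts producing $\langle\nabla\sigma_k,\lambda\partial_r\rangle$) actually engages the hypothesis $\langle\nabla H_k,\partial_r\rangle\le 0$.
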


If we require that the inequality in \eqref{C4>=} is strict as in \cite{Brendle13, Brendle-Eichmair} and $H_k=\text{constant}$, we obtain 
\begin{cor}\label{corHkconst}
Suppose that $\bar{M}^{n+1}=[0,\bar{r})\times_{\lambda} P^{n}$ is a warped product manifold,  
where $(P,g^{P})$ is a closed Riemannian manifold with constant sectional curvature $\epsilon$ and
\begin{equation}\label{eq:C4}
\frac{\lambda(r)''}{\lambda(r)}+\frac{\epsilon-\lambda(r)'^{2}}{\lambda(r)^{2}}> 0.
\end{equation}
Let $x:M\rightarrow \bar{M}$ be an immersion of a closed orientable hypersurface $M^n$ in $\bar{M}$.
For any fixed $k$ with $2\leq k\leq n-1$, if $x(M)$ is $k$-convex, star-shaped and $H_k=\text{constant}$,
then $x(M)$ is a slice $\{r_0\}\times P$ for some $r_0\in (0,\bar{r})$.
\end{cor}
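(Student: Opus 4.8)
The plan is to deduce Corollary \ref{corHkconst} from Theorem \ref{nablaHk}. Since $H_k = \text{constant}$ trivially satisfies $\langle \nabla H_k, \partial_r\rangle \leq 0$, and $x(M)$ is $k$-convex and star-shaped, Theorem \ref{nablaHk} (whose hypothesis \eqref{C4>=} is a weakening of \eqref{eq:C4}) applies directly to give that $x(M)$ is totally umbilic. So the real content is upgrading ``totally umbilic'' to ``a slice $\{r_0\}\times P$.''

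First I would recall the structure of totally umbilic hypersurfaces in these warped products. A totally umbilic hypersurface has $h_{ij} = \frac{H}{n}g_{ij}$, so all principal curvatures equal $\kappa = H/n$, and in particular $H_k = \kappa^k = (H/n)^k$ is constant iff $H$ is constant (using $k$-convexity to ensure $\kappa > 0$, so we can take the positive $k$-th root). Thus the umbilic hypersurface in fact has constant mean curvature. Next I would invoke the classical fact, essentially due to Montiel (and used throughout this circle of ideas), that a closed totally umbilic hypersurface with constant mean curvature in a warped product $[0,\bar r)\times_\lambda P$ with $P$ of constant curvature must be a slice — one shows via the Codazzi equation together with the ambient curvature that the function $\lambda(r)$ restricted to $M$ has vanishing gradient (the umbilicity propagates the warped-product structure), forcing $r$ to be constant on $M$; alternatively, on a slice-candidate one checks that the umbilic constant-$H$ condition is an ODE whose closed solutions are exactly the slices under \eqref{eq:C4}.

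Alternatively, and perhaps more cleanly in the spirit of this paper, I would appeal to Brendle's Heintze–Karcher type inequality (mentioned in the abstract and in the discussion of \cite{Brendle13}), which holds precisely under the strict inequality \eqref{eq:C4}: equality in that inequality forces the hypersurface to be a slice. Since a totally umbilic hypersurface automatically realizes equality in the Heintze–Karcher inequality, and the strict condition \eqref{eq:C4} is exactly what is assumed here (as opposed to the borderline \eqref{C4>=} in Theorem \ref{nablaHk}), we conclude $x(M) = \{r_0\}\times P$. This explains why the strict inequality is needed in the corollary but not in the theorem.

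The main obstacle is the rigidity step: ruling out totally umbilic hypersurfaces that are \emph{not} slices. Under merely \eqref{C4>=} such non-slice umbilic examples can occur (e.g., geodesic spheres in space forms that are umbilic but not slices when the center is off the axis — though in the warped product with closed $P$ the relevant examples are more subtle), which is exactly why \cite{Brendle13,Brendle-Eichmair} and this corollary impose the strict inequality \eqref{eq:C4}. So the crux is verifying that \eqref{eq:C4} together with closedness and constant $H$ forces $dr|_M \equiv 0$; I expect this to follow by combining the umbilicity-propagation argument (differentiating $h_{ij} = \frac{H}{n}g_{ij}$ via Codazzi) with the ambient curvature computation that appears in the proof of Theorem \ref{nablaHk}, or equivalently by citing the equality case of Brendle's Heintze–Karcher inequality directly.
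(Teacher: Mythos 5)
Your first step --- applying Theorem \ref{nablaHk} with $\langle\nabla H_k,\partial_r\rangle=0$ to get umbilicity --- matches the paper, and you correctly locate the real content in the passage from ``umbilic'' to ``slice''. But your two routes for that passage are not on equal footing. The Heintze--Karcher route is not available here: Brendle's inequality and its equality analysis require the hypersurface to be \emph{embedded} (it bounds a region over which the flow argument runs), whereas Corollary \ref{corHkconst} assumes only an immersion --- the remark following the corollary advertises precisely that embeddedness is unnecessary, so invoking the Heintze--Karcher inequality would defeat the purpose; moreover its equality case yields umbilicity, not sliceness, so ``equality forces a slice'' is not quite what that theorem gives. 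Your Codazzi route is the right idea but is left at ``I expect this to follow'': to complete it, trace the Codazzi equation with $h_{ij}=\kappa g_{ij}$ against the curvature formula \eqref{eq:Rm} to find that $\nabla\kappa$ equals $\bigl(\frac{\lambda''}{\lambda}+\frac{\epsilon-\lambda'^2}{\lambda^2}\bigr)\langle\partial_r,\nu\rangle\,\partial_r^{\top}$; since $\kappa$ is constant (which, as you note, follows from constant $H_k$ plus $k$-convexity once the hypersurface is umbilic), the strict inequality \eqref{eq:C4} and star-shapedness ($u>0$) force $\partial_r^{\top}=0$, hence $r$ is constant. A small slip in your sketch: you want the gradient of $r$, not of $\lambda(r)$, to vanish; these differ where $\lambda'=0$, and (C2) is not among the hypotheses of this corollary.

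The paper's own proof is shorter and bypasses both routes: in the proof of Theorem \ref{nablaHk} every term of the integrand in Lemma \ref{key} is shown to be nonnegative, so each must vanish pointwise. In particular
\begin{equation*}
(n-k)\,u\left(\frac{\lambda''}{\lambda}+\frac{\epsilon-\lambda'^{2}}{\lambda^{2}}\right)\langle \partial_{r},e_{j}\rangle^{2}\sigma_{k-1;j}\equiv 0,
\end{equation*}
and since $u>0$ by star-shapedness, $\sigma_{k-1;j}>0$ by $k$-convexity, and the bracket is strictly positive by \eqref{eq:C4}, one reads off $\partial_r^{\top}=0$ directly, without first passing through umbilicity or constancy of $H$.
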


\begin{rem} The above corollary implies that the embeddedness condition in Theorem 2 of \cite{Brendle-Eichmair} is not necessary.
\end{rem}

\begin{cor}\label{corHkphi}
Under the same assumption of Theorem \ref{nablaHk}, if for any fixed $k$ with $2\leq k\leq n-1$, $x(M)$ is $k$-convex, star-shaped and satisfies
\begin{equation*}
H_{k}=\phi(r),
\end{equation*}
where $\phi(r)=x^{*}(\Phi(r))$ and $\Phi(r)$ is a positive non-increasing function of $r$, then $x(M)$ must be totally umbilic. 
\end{cor}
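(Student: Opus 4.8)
The plan is to deduce this corollary directly from Theorem~\ref{nablaHk}, the only thing to verify being that the hypothesis ``$H_k=\phi(r)$ with $\Phi$ non-increasing'' forces the radial monotonicity inequality $\langle \nabla H_k,\partial_r\rangle\le 0$ that Theorem~\ref{nablaHk} requires. Since $x(M)$ is already assumed to be $k$-convex and star-shaped, and the ambient manifold is assumed to satisfy \eqref{C4>=} (we are under the same assumptions as Theorem~\ref{nablaHk}), all the remaining hypotheses are in place for free, so once the sign of the radial derivative is checked the conclusion is immediate.

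To check the sign, recall that $H_k=\phi(r)=x^{*}(\Phi(r))$ is the restriction to $M$ of the radially symmetric function $\Phi$, and that $\partial_r$ is the ambient gradient of the radial coordinate $r$, a unit vector field on $\bar M$. Hence the intrinsic gradient on $M$ of $r\circ x$ is exactly the tangential component of $\partial_r$, and by the chain rule
\[
\nabla H_k = \Phi'(r)\,(\partial_r)^{\top},
\]
where $(\partial_r)^{\top}$ denotes the projection of $\partial_r$ onto $TM$. Since $\nabla H_k\in TM$, pairing with $\partial_r$ gives
\[
\langle \nabla H_k,\partial_r\rangle = \Phi'(r)\,\langle (\partial_r)^{\top},\partial_r\rangle = \Phi'(r)\,\abs{(\partial_r)^{\top}}^2 \le 0,
\]
the inequality holding because $\Phi$ is non-increasing, i.e. $\Phi'\le 0$. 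Applying Theorem~\ref{nablaHk} then yields that $x(M)$ is totally umbilic.

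There is essentially no obstacle here: the corollary is a one-line consequence of Theorem~\ref{nablaHk} together with the elementary identity $\nabla(r\circ x)=(\partial_r)^{\top}$. The only conceptual point worth noting is that the monotonicity direction assumed for $\Phi$ is precisely the one needed to land inside the hypothesis of Theorem~\ref{nablaHk}; as in the mean curvature case (compare Example~\ref{ellip}), allowing $\Phi$ to increase would in general destroy the umbilicity conclusion, so this assumption cannot simply be dropped.
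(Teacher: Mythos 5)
Your proposal is correct and follows exactly the paper's argument: the identity $\nabla H_k=\Phi'(r)\,(\partial_r)^{\top}$ gives $\langle\nabla H_k,\partial_r\rangle=\Phi'\,\abs{(\partial_r)^{\top}}^2\le 0$, after which Theorem~\ref{nablaHk} applies directly. Nothing is missing.
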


\begin{cor}\label{corHkalph}
Under the same assumption of Theorem \ref{nablaHk}, if for any fixed $k$ with $2\leq k\leq n-1$, $x(M)$ is strictly convex and satisfies
\begin{equation*}
H_{k}^{-\alpha}=\langle \lambda\partial_{r},\nu \rangle,
\end{equation*}
where $\alpha>0$ is a constant, then $x(M)$ is a slice $\{r_{0}\}\times P$ for some $r_{0}\in (0,\bar{r})$. 
\end{cor}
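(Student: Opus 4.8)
The plan is to verify that $x(M)$ satisfies every hypothesis of Theorem \ref{nablaHk}, apply it to obtain that $x(M)$ is totally umbilic, and then to upgrade ``umbilic'' to ``slice'' by exploiting the defining equation once more. First, strict convexity means all principal curvatures are positive, so $\kappa\in\Gamma_k$ (hence $x(M)$ is $k$-convex) and $H_k>0$; the right-hand side of $H_k^{-\alpha}=\langle\lambda\partial_r,\nu\rangle$ is then positive, so $\langle\partial_r,\nu\rangle=\lambda^{-1}H_k^{-\alpha}>0$ and $x(M)$ is star-shaped (in particular connected, being a graph over $P$). It remains to verify that $\langle\nabla H_k,\partial_r\rangle\le 0$.

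Set $\Lambda(r)=\int_0^r\lambda(s)\,ds$ and $u=\langle\lambda\partial_r,\nu\rangle$. On $M$ one has $\nabla^M\Lambda=\lambda\,\partial_r^{\top}$ (the tangential part of $\lambda\partial_r$), and since $\lambda\partial_r=\bar\nabla\Lambda$ with $\bar\nabla^2\Lambda=\lambda'\bar g$, a short computation gives $\nabla_i u=h_i{}^{j}\nabla_j\Lambda$, where $h$ denotes the second fundamental form with the sign convention for which strict convexity reads $h>0$. Differentiating $H_k^{-\alpha}=u$ along $M$ and pairing with $\partial_r$,
\[
\langle\nabla H_k,\partial_r\rangle=\langle\nabla H_k,\partial_r^{\top}\rangle=\frac1\lambda\langle\nabla H_k,\nabla^M\Lambda\rangle=-\frac{1}{\alpha\lambda}\,H_k^{\alpha+1}\,h^{ij}\nabla_i\Lambda\,\nabla_j\Lambda .
\]
Since $h$ is positive definite and $H_k,\lambda,\alpha>0$, the right-hand side is $\le 0$, and it vanishes at a point precisely when $\nabla^M\Lambda$ does there. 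Hence Theorem \ref{nablaHk} applies and $x(M)$ is totally umbilic.

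Upgrading ``umbilic'' to ``slice'' is the main obstacle, and it genuinely uses the equation: in highly symmetric ambient spaces (e.g.\ space forms) there are closed strictly convex umbilic hypersurfaces, such as off-centre geodesic spheres, that are not slices, and they are ruled out only because they violate $H_k^{-\alpha}=\langle\lambda\partial_r,\nu\rangle$. Write $h=\mu g$ with $\mu>0$; then $H_k=\mu^k$ and the equation gives $\mu=u^{-1/(k\alpha)}$, so $\nabla_i u=\mu\,\nabla_i\Lambda$ integrates on $x(M)$ to $u^{1+1/(k\alpha)}=(1+\tfrac1{k\alpha})\Lambda+\mathrm{const}$; thus $u$, $\mu$, $\lambda$, $\lambda'$ are all functions of $\Lambda$ along $x(M)$, and $\hess_M\Lambda=(\lambda'-u\mu)g$ becomes $\psi(\Lambda)g$. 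To conclude, I would extract from the equality case of the integral argument proving Theorem \ref{nablaHk} that the error term carrying $\langle\nabla H_k,\partial_r\rangle$ (with a positive weight) must vanish, so that $\langle\nabla H_k,\partial_r\rangle\equiv 0$; combined with the displayed formula and $h>0$ this forces $\nabla^M\Lambda\equiv 0$, i.e.\ $r$ is constant on $x(M)$, so $x(M)=\{r_0\}\times P$ for some $r_0\in(0,\bar r)$. (One could instead try an Obata-type argument on the concircular function $\Lambda|_{x(M)}$, but closing it off still requires exactly the extra information coming from the equation.) The case $k=1$, Corollary \ref{corHalph}, is handled in the same way using Theorem \ref{nablaH} in place of Theorem \ref{nablaHk}.
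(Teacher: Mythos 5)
Your proposal is correct and follows essentially the same route as the paper: differentiate $H_k^{-\alpha}=u$ using $\nabla_i u=h_{ij}\langle\lambda\partial_r,e_j\rangle$ to verify $\langle\nabla H_k,\partial_r\rangle\le 0$, apply Theorem \ref{nablaHk}, and then use the pointwise equality case of the integral formula (the term $-(n-k)\langle\nabla\sigma_k,\lambda\partial_r\rangle$ must vanish) together with strict convexity to force $\partial_r^{\top}\equiv 0$, i.e.\ a slice. The digressions (the first integral $u^{1+1/(k\alpha)}=(1+\tfrac1{k\alpha})\Lambda+\mathrm{const}$ and the Obata-type remark) are not needed, but the core argument coincides with the paper's.
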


Now we turn to the warped product manifold $\bar{M}^{n+1}=[0,\bar{r})\times_{\lambda} P^{n}$, where  $(P,g^{P})$ is with a closed Riemannian manifold with constant sectional curvature $\epsilon$.
As in \cite{Brendle13,Wu-Xia}, we list four conditions of the warping function $\lambda:[0,\bar{r})\rightarrow [0,+\infty)$:
\begin{itemize}
\item[(C1)] $\lambda'(0)=0$ and $\lambda''(0)>0$.
\item[(C2)] $\lambda'(r)>0$ for all $r\in(0,\bar{r})$.
\item[(C3)] The function $$2\frac{\lambda''(r)}{\lambda(r)}-(n-1)\frac{\epsilon-\lambda'(r)^{2}}{\lambda(r)^{2}}$$
is non-decreasing for $r\in(0,\bar{r})$.
\item[(C4)] We have $$\frac{\lambda''(r)}{\lambda(r)}+\frac{\epsilon-\lambda'(r)^{2}}{\lambda(r)^{2}}>0$$
for all $r\in(0,\bar{r})$.
\end{itemize}

Instead of star-sharpness or convexity, under the embeddedness assumption, we study hypersurfaces satisfying 
\begin{equation}\label{uHlambda}
H_{k}^{-\alpha}\lambda'=\langle \lambda\partial_{r},\nu \rangle,
\end{equation}
and prove the following Alexandrov type theorem.

\begin{thm}\label{Hklambda}
Suppose that $(\bar{M}, \bar{g})$ is a warped product manifold satisfying conditions (C1)-(C4).
Let $x:M\rightarrow \bar{M}$ be an immersion of a connected closed embedded orientable hypersurface $M^n$ in $\bar{M}$.
If $H_{k}>0$ and $x(M)$ satisfies
\begin{equation}
H_{k}^{-\alpha}\lambda'=\langle \lambda\partial_{r},\nu \rangle,
\end{equation}
for any fixed $k$ with $1\leq k\leq n$ and $\alpha\geq \frac{1}{k}$, then $x(M)$ is a slice $\{r_{0}\}\times P$ for some $r_{0}\in (0,\bar{r})$.
\end{thm}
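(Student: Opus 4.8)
The plan is to follow Brendle's Heintze--Karcher strategy, adapted to the weighting by $\lambda'$ that appears in \eqref{uHlambda}. First I would recall the two analytic ingredients that conditions (C1)--(C4) guarantee. On the one hand, the Minkowski-type (Brendle--Heintze--Karcher) formula: if $M$ is closed, embedded, with $H=H_1>0$, then
\begin{equation*}
\int_M \frac{\lambda'}{H}\, d\mu \geq \frac{n+1}{n}\int_\Omega \lambda'\, d\vol,
\end{equation*}
where $\Omega$ is the enclosed domain, with equality precisely when $M$ is umbilic (this is Brendle's theorem under (C1)--(C4); the $k=1$ case is immediate). On the other hand, the generalized Minkowski identities in a space form factor,
\begin{equation*}
\int_M \lambda' H_{k-1}\, d\mu = \int_M H_k \langle \lambda \partial_r, \nu\rangle\, d\mu,
\end{equation*}
valid for $1\le k\le n$ (these follow from the Newton-tensor divergence computations in \cite{Brendle-Eichmair,Wu-Xia}, using that $\mathrm{Rm}$ of a warped product over a space form has the required algebraic form).

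The second step is to combine these with the hypothesis. Substituting $\langle \lambda\partial_r,\nu\rangle = H_k^{-\alpha}\lambda'$ into the Minkowski identity gives
\begin{equation*}
\int_M \lambda' H_{k-1}\, d\mu = \int_M \lambda' H_k^{1-\alpha}\, d\mu.
\end{equation*}
Meanwhile I want to show $H>0$ everywhere so that Heintze--Karcher applies; here $k$-convexity is \emph{not} assumed, only $H_k>0$, so this needs an argument. The standard device is that at the point of $M$ farthest from the "center" (the point where $r$ is maximal, which exists by compactness and where $\nu=\partial_r$), all principal curvatures are positive by comparison with the slice through that point; combined with $H_k>0$ and connectedness via a Gårding-cone/continuity argument (as in \cite{Brendle-Eichmair}), one concludes $\kappa\in\Gamma_k$ on all of $M$, hence $H=H_1>0$ by the Maclaurin inequalities.

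The third step is the chain of inequalities. From Maclaurin, $H_{k-1}\ge H_k^{(k-1)/k}$ and $H\ge H_k^{1/k}$ on $\Gamma_k$. Using \eqref{uHlambda} once more and the hypothesis $\alpha\ge 1/k$, I would estimate
\begin{equation*}
\frac{n+1}{n}\int_\Omega \lambda'\, d\vol \;\le\; \int_M \frac{\lambda'}{H}\, d\mu \;\le\; \int_M \lambda' H_k^{-1/k}\, d\mu \;\le\; \int_M \lambda' H_k^{-\alpha}\cdot H_k^{\alpha - 1/k}\, d\mu,
\end{equation*}
and then reconcile this with the divergence identity $\int_M \langle \lambda\partial_r,\nu\rangle\, d\mu = (n+1)\int_\Omega \lambda'\,d\vol$ (integration of $\divg(\lambda\partial_r)=(n+1)\lambda'$, condition (C2) giving $\lambda'>0$ so all weights are positive) together with $\langle\lambda\partial_r,\nu\rangle = H_k^{-\alpha}\lambda'$. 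Tracking the exponents carefully, the hypothesis $\alpha\ge 1/k$ is exactly what makes every intermediate inequality point the same way, forcing all of them to be equalities. Equality in Heintze--Karcher then yields that $M$ is umbilic, and an umbilic closed hypersurface in a warped product over a space form satisfying (C1)--(C2) is a slice $\{r_0\}\times P$ (this is a known rigidity statement; umbilicity forces $r$ constant along $M$ via the structure equations).

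The main obstacle I anticipate is the positivity/cone-membership step: going from the single hypothesis $H_k>0$ to $\kappa\in\Gamma_k$ globally, which is what legitimizes both the Maclaurin inequalities and the application of the Heintze--Karcher inequality (the latter really only needs $H>0$ and embeddedness, but the former needs the full cone). Handling this requires the farthest-point comparison plus a connectedness argument, and one must be careful that the argument does not secretly use star-shapedness, which is deliberately dropped in this theorem in favor of embeddedness. A secondary technical point is bookkeeping the Hölder/Maclaurin exponents so that the role of $\alpha\ge 1/k$ is transparent and the equality case is genuinely rigid rather than merely "equality somewhere."
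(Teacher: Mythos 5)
Your overall strategy coincides with the paper's: deduce $k$-convexity from $H_k>0$ (the paper simply cites Lemma 2.3 of \cite{Li-Wei-Xiong}, which encodes exactly the farthest-point-plus-connectedness argument you sketch, and it does not use star-shapedness), then combine Brendle's Heintze--Karcher inequality, the Minkowski-type formula, and Maclaurin's inequality to force equality in Heintze--Karcher. However, there is a genuine gap at the decisive analytic step. After substituting the hypothesis, the two inequalities at your disposal are
\begin{equation*}
\int_M H_k^{-\alpha}\lambda'\,d\mu\;\le\;\int_M\frac{\lambda'}{H_1}\,d\mu\;\le\;\int_M H_k^{-1/k}\lambda'\,d\mu
\qquad\text{and}\qquad
\int_M H_k^{1-\alpha}\lambda'\,d\mu\;\ge\;\int_M H_{k-1}\lambda'\,d\mu,
\end{equation*}
and to conclude you must prove the reverse bound $\int_M H_k^{-1/k}\lambda'\,d\mu\le\int_M H_k^{-\alpha}\lambda'\,d\mu$. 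This does \emph{not} follow pointwise: for $\alpha\ge 1/k$ one has $H_k^{-\alpha}\le H_k^{-1/k}$ where $H_k\ge 1$ but the opposite where $H_k\le 1$, so "the inequalities point the same way" is not automatic, and your final displayed step $\int_M\lambda' H_k^{-1/k}\le\int_M\lambda' H_k^{-\alpha}\cdot H_k^{\alpha-1/k}$ is a tautological identity that carries no information. The divergence identity $\int_M\langle\lambda\partial_r,\nu\rangle\,d\mu=(n+1)\int_\Omega\lambda'\,d\vol$ is already absorbed into the form of Heintze--Karcher used above, so it cannot supply the missing comparison either.

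The paper closes this gap with a two-step interpolation that is the real content of the proof. First, writing $H_k^{-1/k}\lambda'=\bigl(H_{k-1}^{1-p}H_k^{-p/k}\lambda'\bigr)^{1/p}\bigl(H_{k-1}\lambda'\bigr)^{(p-1)/p}$ and applying H\"older's inequality, Maclaurin's inequality $H_{k-1}\ge H_k^{(k-1)/k}$ (with $1-p\le 0$), and the Minkowski formula, with the choice $p=\frac{k\alpha+k-1}{k}\ge 1$ (this is precisely where $\alpha\ge\frac1k$ enters), one gets
\begin{equation*}
\int_M H_k^{-1/k}\lambda'\,d\mu\;\le\;\Bigl(\int_M H_k^{-\alpha}\lambda'\,d\mu\Bigr)^{\frac{k}{k\alpha+k-1}}\Bigl(\int_M H_k^{1-\alpha}\lambda'\,d\mu\Bigr)^{\frac{k\alpha-1}{k\alpha+k-1}}.
\end{equation*}
A second application of the same device with $p=\frac{k\alpha-1+k}{k\alpha-1}$ yields $\int_M H_k^{1-\alpha}\lambda'\,d\mu\le\int_M H_k^{-\alpha}\lambda'\,d\mu$, and substituting this into the display gives the needed bound $\int_M H_k^{-1/k}\lambda'\,d\mu\le\int_M H_k^{-\alpha}\lambda'\,d\mu$, whence equality holds in Heintze--Karcher and Brendle's rigidity finishes the proof. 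Without some such integral interpolation your argument does not close. A minor additional inaccuracy: in a general warped product satisfying (C4) the Minkowski-type formula is an inequality (with the sign used above), not an identity; stating it as an equality is only harmless here because the inequality goes in the direction you need.
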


\begin{rem}
It is interesting to compare Theorem \ref{Hklambda} and Corollary \ref{corHkalph} in the special case when
$P=\bbs^{n}$ and $\lambda(r)=r$, i.e. $\bar{M}$ is Euclidean space $\bbr^{n+1}$. With embeddedness and less convexity requirement (only $H_{k}>0$), Theorem \ref{Hklambda} leads to the conclusion including the case when $k=n$.
\end{rem}

\begin{rem}
Throughout the paper, the assumptions for the ambient spaces $\bar{M}$ are satisfied by space forms, the deSitter-Schwarzschild, the Reissner-Nordstr\"{o}m manifolds and many other manifolds (c.f. \cite{Brendle13}).
\end{rem}

The paper is organized as follows. In Section \ref{wprod}, we list some useful properties of warped products. In Section \ref{intformula}, we derive an integral formula which is the key to the proof of our main theorems. In Section \ref{proof}, we present the proofs of Theorem \ref{nablaH}, Theorem \ref{nablaHk} and the corollaries. In Section \ref{embed}, we prove Theorem \ref{Hklambda}. In Appendix \ref{variation}, we show that a hypersurface with a given positive mean curvature function is the critical point of the area functional under variations preserving weighted volume.
Throughout the paper, the summation convention is used unless otherwise stated.

%%%%%%%%%%%%%%
\section{Preliminaries of warped products}
\label{wprod}

In this section, we list some basic properties of warped products $(\bar{M}=[0,\bar{r})\times_{\lambda}P^{n}, \bar{g})$ given above (see \cite{ONeill}). 

\begin{prop}\label{connect}
Suppose $U,V\in \Gamma(TP)$. The Levi-Civita connection $\bar{\nabla}$ of a warped product $(\bar{M}=[0,\bar{r})\times_{\lambda} P, \bar{g})$ satisfies
\begin{enumerate}[i)]
\item $\bar{\nabla}_{\partial_{r}}\partial_{r}=0$,
\item $\bar{\nabla}_{\partial_{r}}V=\bar{\nabla}_{V}\partial_{r}=\frac{\lambda'}{\lambda}V$,
\item $\bar{\nabla}_{V}U=\nabla^{P}_{V}U-\frac{\lambda'}{\lambda}\bar{g}(V,U)\partial_{r}$,
\end{enumerate}
where $\nabla^P$ is the Levi-Civita connection of $(P, g^P)$.
\end{prop}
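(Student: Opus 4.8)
The plan is to derive all three formulas directly from the Koszul formula
$$2\bar{g}(\bar{\nabla}_X Y, Z) = X\bar{g}(Y,Z)+Y\bar{g}(X,Z)-Z\bar{g}(X,Y)+\bar{g}([X,Y],Z)-\bar{g}([X,Z],Y)-\bar{g}([Y,Z],X),$$
which characterizes the unique metric-compatible torsion-free connection. Before applying it, I would record two sets of elementary facts. First, the bracket relations for the canonical lifts to $\bar{M}$ of $\partial_{r}$ and of vector fields on $P$: since the lift of any $W\in\Gamma(TP)$ is $r$-independent in product coordinates, $[\partial_{r}, W]=0$, while $[U,V]$ coincides with the lift of the bracket taken on $P$. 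Second, the metric relations $\bar{g}(\partial_{r},\partial_{r})=1$, $\bar{g}(\partial_{r},V)=0$, and $\bar{g}(U,V)=\lambda^{2} g^{P}(U,V)$, together with the derivative identity $\partial_{r}\bar{g}(U,V)=\frac{2\lambda'}{\lambda}\bar{g}(U,V)$, which holds because $g^{P}(U,V)$ does not depend on $r$.

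For (i), I would feed $X=Y=\partial_{r}$ into the Koszul formula and test against $Z=\partial_{r}$ and against an arbitrary $Z=W\in\Gamma(TP)$. The first test gives $0$ because $\bar{g}(\partial_{r},\partial_{r})$ is constant; the second test also gives $0$, since the bracket terms vanish via $[\partial_{r},W]=0$ and the remaining term is the $W$-derivative of the constant $\bar{g}(\partial_{r},\partial_{r})$. As these $Z$ span $T\bar{M}$, this yields $\bar{\nabla}_{\partial_{r}}\partial_{r}=0$. For (ii), torsion-freeness and $[\partial_{r},V]=0$ immediately give $\bar{\nabla}_{\partial_{r}}V=\bar{\nabla}_{V}\partial_{r}$, so it suffices to compute one of them; testing $\bar{g}(\bar{\nabla}_{\partial_{r}}V,\partial_{r})$ and $\bar{g}(\bar{\nabla}_{\partial_{r}}V,W)$ against the Koszul formula and invoking the derivative identity $\partial_{r}\bar{g}(V,W)=\frac{2\lambda'}{\lambda}\bar{g}(V,W)$ isolates the coefficient $\frac{\lambda'}{\lambda}$.

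For (iii), I would decompose $\bar{\nabla}_{V}U$ into its $\partial_{r}$-component and its $TP$-component. The normal component is extracted by testing against $\partial_{r}$: here the only surviving Koszul term is $-\partial_{r}\bar{g}(V,U)$, which by the derivative identity produces exactly $-\frac{\lambda'}{\lambda}\bar{g}(V,U)$. The tangential component is extracted by testing against $W\in\Gamma(TP)$; here every occurrence of $\bar{g}$ among $P$-vector fields carries a common factor $\lambda^{2}$, and since this factor is constant in the $P$-directions it factors out of the entire right-hand side, reducing it to $\lambda^{2}$ times the Koszul formula for $(P,g^{P})$. This is the one step that requires care, and it is where I expect the only genuine bookkeeping: one must confirm that the warping factor appears uniformly across all six Koszul terms, so that the tangential part is precisely $\nabla^{P}_{V}U$, i.e. that rescaling $g^{P}$ by the ($P$-constant) $\lambda^{2}$ leaves the Levi-Civita connection of $P$ unchanged. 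Combining the two components gives $\bar{\nabla}_{V}U=\nabla^{P}_{V}U-\frac{\lambda'}{\lambda}\bar{g}(V,U)\partial_{r}$, completing the proof.
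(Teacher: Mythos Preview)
Your argument is correct: the Koszul formula together with the bracket relations $[\partial_{r},V]=0$, $[U,V]\in\Gamma(TP)$ and the metric identities $\bar{g}(\partial_{r},V)=0$, $\bar{g}(U,V)=\lambda^{2}g^{P}(U,V)$ yields all three items exactly as you outline; the only subtle point, that the factor $\lambda^{2}$ can be pulled through the $P$-directional derivatives because $\lambda$ is constant along $P$, you have identified and handled correctly.

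The paper itself does not supply a proof of this proposition at all; it simply records the formulas as standard facts about warped products and refers the reader to O'Neill's textbook. Your Koszul-formula computation is precisely the classical derivation one finds there, so in effect you have filled in the omitted reference rather than taken a different route.
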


\begin{rem}
From the preceding proposition, we know that any slice $\{r\}\times P$ in a warped product $\bar{M}=[0,\bar{r})\times_{\lambda} P$ is totally umbilic.
\end{rem}

\begin{prop}\label{curv4}
Suppose $Y_{1},Y_{2},Y_{3},Y_{4}\in \Gamma(TP)$. The $(0,4)$-Riemannian curvature tensor $\overline{\riem}$ of a warped product $(\bar{M}=[0,\bar{r})\times_{\lambda} P, \bar{g})$ satisfies
\begin{enumerate}[i)]
\item $\overline{\riem}(\partial_{r},Y_{1},\partial_{r},Y_{2})=-\frac{\lambda''}{\lambda}\bar{g}(Y_{1},Y_{2})$,
\item $\overline{\riem}(\partial_{r},Y_{1},Y_{2},Y_{3})=0$,
\item $\overline{\riem}(Y_{1},Y_{2},Y_{3},Y_{4})=\lambda^{2}\riem^{P}(Y_{1},Y_{2},Y_{3},Y_{4})-\frac{\lambda'^{2}}{\lambda^{2}}(\bar{g}(Y_{1},Y_{3})\bar{g}(Y_{2},Y_{4})
-\bar{g}(Y_{2},Y_{3})\bar{g}(Y_{1},Y_{4})),
$
\end{enumerate}
where $\riem^{P}$ is the $(0,4)$-Riemannian curvature tensor  of $(P, g^P)$.
\end{prop}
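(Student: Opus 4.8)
The plan is to compute the three components of $\overline{\riem}$ directly from the connection formulas in Proposition \ref{connect}, working with the curvature convention of O'Neill \cite{ONeill} that accompanies that proposition, namely $\overline{\riem}(X,Y,Z,W)=\bar{g}\bigl(\bar{\nabla}_{[X,Y]}Z-\bar{\nabla}_X\bar{\nabla}_Y Z+\bar{\nabla}_Y\bar{\nabla}_X Z,\,W\bigr)$ (this is precisely the sign that produces the $-\lambda''/\lambda$ in i)). Throughout I regard each $Y_i\in\Gamma(TP)$ as lifted to a vector field on $\bar{M}$ that is independent of $r$; the three facts I shall use repeatedly are that $[\partial_r,Y_i]=0$, that the Lie bracket of two such lifts is again the lift of the corresponding bracket on $P$, and that $r$-dependent coefficients such as $\lambda'/\lambda$ are annihilated by $Y_i$ while $\lambda$ is constant along $P$-directions. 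Since $\bar{g}(\partial_r,Y)=0$ for $Y\in\Gamma(TP)$, in each case I need only track the component of $\bar{R}(\cdot,\cdot)\cdot$ that pairs nontrivially with the last slot.

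For i) and ii) the computation is short. In i) the bracket term drops since $[\partial_r,Y_1]=0$, the term $\bar{\nabla}_{Y_1}\bar{\nabla}_{\partial_r}\partial_r$ drops by Proposition \ref{connect} i), and the surviving term $-\bar{\nabla}_{\partial_r}(\tfrac{\lambda'}{\lambda}Y_1)$ is evaluated by the product rule together with $\bar{\nabla}_{\partial_r}Y_1=\tfrac{\lambda'}{\lambda}Y_1$; the cross terms cancel and leave $-\tfrac{\lambda''}{\lambda}Y_1$, which gives i) after pairing with $Y_2$. In ii) I expand $\bar{R}(\partial_r,Y_1)Y_2$ the same way, using Proposition \ref{connect} iii) for $\bar{\nabla}_{Y_1}Y_2$ and Proposition \ref{connect} ii) for $\bar{\nabla}_{\partial_r}Y_2$; the tangential $\nabla^P_{Y_1}Y_2$ contributions cancel and what survives is a pure multiple of $\partial_r$ (one checks the coefficient is $\lambda\lambda''g^P(Y_1,Y_2)$ after using $\bar{g}(Y_1,Y_2)=\lambda^2 g^P(Y_1,Y_2)$). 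Pairing with $Y_3\in\Gamma(TP)$ then yields $0$ because $\bar{g}(\partial_r,Y_3)=0$.

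The substance of the proposition is iii). Here I expand $\bar{R}(Y_1,Y_2)Y_3$ by applying Proposition \ref{connect} iii) twice: first $\bar{\nabla}_{Y_j}Y_3=\nabla^P_{Y_j}Y_3-\tfrac{\lambda'}{\lambda}\bar{g}(Y_j,Y_3)\partial_r$, and then differentiating again, where the $\nabla^P_{Y_j}Y_3$ piece is handled by Proposition \ref{connect} iii) and the $\partial_r$ piece by the product rule with Proposition \ref{connect} ii). Because the final slot $Y_4$ lies in $\Gamma(TP)$, I retain only tangential components. The $\nabla^P\nabla^P$ and bracket terms assemble exactly into $R^P(Y_1,Y_2)Y_3$, and after pairing with $Y_4$ and using $\bar{g}|_{TP}=\lambda^2 g^P$ this contributes $\lambda^2\riem^P(Y_1,Y_2,Y_3,Y_4)$. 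The tangential parts of the correction terms coming from $\bar{\nabla}_{Y_j}$ hitting the $\partial_r$ factors produce, after antisymmetrization in $Y_1,Y_2$, precisely $\tfrac{\lambda'^2}{\lambda^2}\bigl(\bar{g}(Y_2,Y_3)Y_1-\bar{g}(Y_1,Y_3)Y_2\bigr)$, which upon pairing with $Y_4$ gives the stated quadratic term. I expect the only real obstacle to be the bookkeeping in this last step: one must verify that the $\partial_r$-valued pieces (including $Y_1\bigl(\tfrac{\lambda'}{\lambda}\bar{g}(Y_2,Y_3)\bigr)\partial_r$, which is irrelevant here) genuinely do not contribute to the tangential projection, and keep the sign convention consistent so that the $\riem^P$ and correction terms carry exactly the signs displayed in iii).
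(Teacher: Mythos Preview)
Your argument is correct: the direct computation of each curvature component from the connection formulas in Proposition~\ref{connect}, using the O'Neill sign convention, is exactly the standard derivation. The paper does not actually supply a proof of this proposition; it is simply stated (together with Propositions~\ref{connect} and~\ref{ric}) as a basic fact quoted from \cite{ONeill}, so there is nothing to compare against beyond noting that your approach is the one the cited reference takes.
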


\begin{prop}\label{ric}
Suppose $U,V\in \Gamma(TP)$. The Ricci curvature tensor $\overline{\ric}$ of a warped product $(\bar{M}=[0,\bar{r})\times_{\lambda} P, \bar{g})$ satisfies
\begin{enumerate}[i)]
\item $\overline{\ric}(\partial_{r},\partial_{r})=-n\frac{\lambda''}{\lambda},$
\item $\overline{\ric}(\partial_{r},V)=0,$
\item $\overline{\ric}(V,U)=\ric^{P}(V,U)-\left(\frac{\lambda''}{\lambda}+(n-1)\frac{\lambda'^{2}}{\lambda^{2}}\right)\bar{g}(V,U),$
\end{enumerate}
where $\ric^{P}$ is the Ricci curvature tensor  of $(P, g^P)$.
\end{prop}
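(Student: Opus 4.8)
The plan is to deduce all three identities by tracing the $(0,4)$ curvature formulas of Proposition \ref{curv4}; this is the standard warped-product Ricci computation (cf.\ \cite{ONeill}), so there is no serious obstacle, only careful bookkeeping. First I would fix a point of $\bar M$ and choose a local $\bar g$-orthonormal frame $\{e_0,e_1,\dots,e_n\}$ adapted to the product structure: $e_0=\partial_r$, while $e_1,\dots,e_n$ are tangent to the slice and of the form $e_i=\lambda^{-1}\hat e_i$ for a $g^P$-orthonormal frame $\hat e_1,\dots,\hat e_n$ on $P$, so that $\bar g(e_i,e_j)=\delta_{ij}$ and $g^P(e_i,e_j)=\lambda^{-2}\delta_{ij}$. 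With the conventions under which Proposition \ref{curv4} and Proposition \ref{ric} are written, $\overline{\ric}(X,Y)=\sum_{a=0}^{n}\overline{\riem}(X,e_a,Y,e_a)$, and $\ric^P$ is the analogous $g^P$-trace of $\riem^P$. Everything then reduces to summing the right-hand sides of Proposition \ref{curv4}.

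For $\overline{\ric}(\partial_r,\partial_r)$, the $a=0$ term vanishes and each $a=i$ term equals $-\lambda''/\lambda$ by Proposition \ref{curv4} i), so the sum is $-n\lambda''/\lambda$. For $\overline{\ric}(\partial_r,V)$ with $V$ tangent to $P$, the $a=0$ term vanishes by antisymmetry and each term $\overline{\riem}(\partial_r,e_i,V,e_i)$ vanishes by Proposition \ref{curv4} ii), hence $\overline{\ric}(\partial_r,V)=0$. For $\overline{\ric}(V,U)$ with $V,U$ tangent to $P$, the $a=0$ term gives $\overline{\riem}(V,\partial_r,U,\partial_r)=-\frac{\lambda''}{\lambda}\bar g(V,U)$ by i) together with the symmetries of the curvature tensor, and the $a=i$ terms are evaluated from iii).

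The one place that needs care is the $\riem^P$ piece of iii), namely $\lambda^2\sum_{i=1}^{n}\riem^P(V,e_i,U,e_i)$: using $e_i=\lambda^{-1}\hat e_i$ in the second and fourth arguments gives $\sum_i\riem^P(V,e_i,U,e_i)=\lambda^{-2}\sum_i\riem^P(V,\hat e_i,U,\hat e_i)=\lambda^{-2}\ric^P(V,U)$, so the prefactor $\lambda^2$ cancels and this piece contributes exactly $\ric^P(V,U)$. The remaining part of iii) gives $-\frac{\lambda'^2}{\lambda^2}\sum_i\bigl(\bar g(V,U)-\bar g(e_i,U)\bar g(V,e_i)\bigr)=-(n-1)\frac{\lambda'^2}{\lambda^2}\bar g(V,U)$, using $\sum_i\bar g(e_i,U)\bar g(V,e_i)=\bar g(V,U)$ since $V,U$ lie in $\mathrm{span}\{e_i\}$. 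Adding the $a=0$ contribution yields $\overline{\ric}(V,U)=\ric^P(V,U)-\bigl(\frac{\lambda''}{\lambda}+(n-1)\frac{\lambda'^2}{\lambda^2}\bigr)\bar g(V,U)$, as claimed. The main thing to watch throughout is taking each trace with respect to the correct metric — in particular not conflating the $\bar g$- and $g^P$-orthonormal frames — so that the powers of $\lambda$ cancel as they should.
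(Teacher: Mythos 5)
Your proof is correct: the paper itself states Proposition \ref{ric} without proof (citing O'Neill), and your derivation by tracing Proposition \ref{curv4} in a $\bar g$-orthonormal frame $\{\partial_r,\lambda^{-1}\hat e_i\}$ is exactly the standard argument, with the $\lambda$-powers and the $(n-1)$ coefficient handled correctly.
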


For the convenience, we introduce the Kulkarni-Nomizu product  $\owedge$. For any two $(0,2)$-type symmetric tensors $h$ and $w$, $h\owedge w$ is the $4$-tensor given by
\begin{align*}
(h\owedge w)(X_{1},X_{2},X_{3},X_{4})
&=h(X_{1},X_{3})w(X_{2},X_{4})+h(X_{2},X_{4})w(X_{1},X_{3})\\
&\quad-h(X_{1},X_{4})w(X_{2},X_{3})-h(X_{2},X_{3})w(X_{1},X_{4}).
\end{align*}
The following result is a corollary of Proposition \ref{curv4} through a straightforward calculation and 
the proof is given for the completeness.

\begin{prop}
Suppose $(P,g^{P})$ is a Riemannian manifold with constant sectional curvature $\epsilon$. The Riemannian curvature tensor $\overline{\riem}$ of a warped product $\bar{M}=[0,\bar{r})\times_{\lambda} P$ can be expressed as follows:
\begin{equation}\label{eq:Rm}
\overline{\riem}=\frac{\epsilon-\lambda'^{2}}{2\lambda^{2}}\bar{g}\owedge\bar{g}-\left(\frac{\lambda''}{\lambda}+\frac{\epsilon-\lambda'^{2}}{\lambda^{2}}\right)\bar{g}\owedge dr^{2}.
\end{equation}
\end{prop}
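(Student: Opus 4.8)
The plan is to derive the identity \eqref{eq:Rm} directly from Proposition \ref{curv4} by a term-by-term comparison, using that on a space of constant sectional curvature $\epsilon$ the curvature tensor of $(P,g^P)$ is $\riem^P = \tfrac{\epsilon}{2}\, g^P \owedge g^P$. First I would record the two auxiliary facts I need: on the slice distribution, $\bar g$ restricted to $TP$ equals $\lambda^2 g^P$, so $\lambda^2 \riem^P(Y_1,Y_2,Y_3,Y_4) = \tfrac{\epsilon}{2\lambda^2}(\bar g\owedge\bar g)(Y_1,Y_2,Y_3,Y_4)$ for $Y_i\in\Gamma(TP)$; and $dr^2 = \partial_r^\flat\otimes\partial_r^\flat$, so $dr^2(\partial_r,\partial_r)=1$ and $dr^2(Y,\cdot)=0$ for $Y\in\Gamma(TP)$. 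With these, both sides of \eqref{eq:Rm} are $(0,4)$-tensors with the algebraic symmetries of a curvature tensor, so it suffices to check equality on the three types of arguments that appear in Proposition \ref{curv4}: all four tangent to $P$; one equal to $\partial_r$ and the rest tangent to $P$ (handled together with the mixed case giving zero); and two equal to $\partial_r$.

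The three checks are short computations. For $Y_1,Y_2,Y_3,Y_4\in\Gamma(TP)$, the term $\bar g\owedge dr^2$ vanishes since every slot of $dr^2$ is fed a vector tangent to $P$, and the first term on the right of \eqref{eq:Rm} reproduces $\lambda^2\riem^P$ minus the $\tfrac{\lambda'^2}{\lambda^2}$ correction exactly as in item iii) of Proposition \ref{curv4}, after expanding $(\bar g\owedge\bar g)(Y_1,Y_2,Y_3,Y_4) = 2\bigl(\bar g(Y_1,Y_3)\bar g(Y_2,Y_4) - \bar g(Y_1,Y_4)\bar g(Y_2,Y_3)\bigr)$. For arguments with exactly one $\partial_r$, both sides vanish: item ii) on the left, and on the right because each Kulkarni-Nomizu term contains a factor in which $\partial_r$ is paired against a $TP$-vector through $\bar g$ (which is zero) or through $dr^2$ against a $TP$-vector (also zero). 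Finally, for $\overline{\riem}(\partial_r,Y_1,\partial_r,Y_2)$, the first term contributes $\tfrac{\epsilon-\lambda'^2}{2\lambda^2}(\bar g\owedge\bar g)(\partial_r,Y_1,\partial_r,Y_2) = \tfrac{\epsilon-\lambda'^2}{\lambda^2}\bar g(Y_1,Y_2)$ (using $\bar g(\partial_r,\partial_r)=1$, $\bar g(\partial_r,Y_i)=0$), the second term contributes $-\bigl(\tfrac{\lambda''}{\lambda}+\tfrac{\epsilon-\lambda'^2}{\lambda^2}\bigr)(\bar g\owedge dr^2)(\partial_r,Y_1,\partial_r,Y_2) = -\bigl(\tfrac{\lambda''}{\lambda}+\tfrac{\epsilon-\lambda'^2}{\lambda^2}\bigr)\bar g(Y_1,Y_2)$, and the sum collapses to $-\tfrac{\lambda''}{\lambda}\bar g(Y_1,Y_2)$, matching item i).

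There is no real obstacle here; the only point requiring a little care is bookkeeping the signs and the factor of $2$ in the Kulkarni-Nomizu product, and making sure the two ``mixed'' cases (one $\partial_r$ among $TP$-vectors, in either pair) are both covered by the symmetry argument so one does not have to write them out separately. Since both sides are genuine curvature-type tensors, agreement on these spanning configurations forces agreement everywhere, which completes the proof.
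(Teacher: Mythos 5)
Your proposal is correct and takes essentially the same route as the paper: both reduce the identity to Proposition \ref{curv4} together with $\riem^{P}=\tfrac{\epsilon}{2}g^{P}\owedge g^{P}$ and $\bar g|_{TP}=\lambda^{2}g^{P}$. The only difference is organizational --- you verify \eqref{eq:Rm} on the spanning configurations (all arguments tangent to $P$, one $\partial_r$, two $\partial_r$'s) and invoke multilinearity and the curvature symmetries, whereas the paper decomposes general vectors into radial and $P$-tangential parts and reassembles the Kulkarni--Nomizu products; the sign and factor-of-$2$ bookkeeping in your three checks is accurate.
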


\begin{proof}

Let $e_{A},e_{B},e_{C},e_{D}\in \Gamma(T\bar{M})$, $r_{A}=\bar{g}(e_{A},\partial_{r})$ and $e_{A}^{*}=e_{A}-r_{A}\partial_{r}$. Using Proposition \ref{curv4}, we have
\begin{align*}
&\overline{\riem}(e_{A},e_{B},e_{C},e_{D})\\
&=\overline{\riem}(e_{A}^{*},e_{B}^{*},e_{C}^{*},e_{D}^{*})+\overline{\riem}(e_{A}^{*},r_{B}\partial_{r},e_{C}^{*},r_{D}\partial_{r})+\overline{\riem}(e_{A}^{*},r_{B}\partial_{r},r_{C}\partial_{r},e_{D}^{*})\\
&\quad+\overline{\riem}(r_{A}\partial_{r},e_{B}^{*},e_{C}^{*},r_{D}\partial_{r})+\overline{\riem}(r_{A}\partial_{r},e_{B}^{*},r_{C}\partial_{r},e_{D}^{*})\\
&=\lambda^{2}\riem^{P}(e_{A}^{*},e_{B}^{*},e_{C}^{*},e_{D}^{*})-\frac{\lambda'^{2}}{\lambda^{2}}\Big(\bar{g}(e_{A}^{*},e_{C}^{*})\bar{g}(e_{B}^{*},e_{D}^{*})-\bar{g}(e_{B}^{*},e_{C}^{*})\bar{g}(e_{A}^{*},e_{D}^{*})\Big)\\
&\quad-\frac{\lambda''}{\lambda}\Big(r_{B}r_{D}\bar{g}(e_{A}^{*},e_{C}^{*})-r_{B}r_{C}\bar{g}(e_{A}^{*},e_{D}^{*})-r_{A}r_{D}\bar{g}(e_{B}^{*},e_{C}^{*})+r_{A}r_{C}\bar{g}(e_{B}^{*},e_{D}^{*})\Big).
\end{align*}

Since
\begin{align*}
\bar{g}(e_{A}^{*},e_{C}^{*})=\bar{g}(e_{A},e_{C})-r_{A}r_{C}
=(\bar{g}-dr^{2})(e_{A},e_{C}),
\end{align*}
we know
\begin{align*}
& \bar{g}(e_{A}^{*},e_{C}^{*})\bar{g}(e_{B}^{*},e_{D}^{*})-\bar{g}(e_{B}^{*},e_{C}^{*})\bar{g}(e_{A}^{*},e_{D}^{*})\\
&\qquad=\frac{1}{2}(\bar{g}-dr^{2})\owedge (\bar{g}-dr^{2})(e_{A},e_{B},e_{C},e_{D})
\end{align*}
and
\begin{align*}
& r_{B}r_{D}\bar{g}(e_{A}^{*},e_{C}^{*})-r_{B}r_{C}\bar{g}(e_{A}^{*},e_{D}^{*})-r_{A}r_{D}\bar{g}(e_{B}^{*},e_{C}^{*})+r_{A}r_{C}\bar{g}(e_{B}^{*},e_{D}^{*})\\
&\qquad=(\bar{g}-dr^{2})\owedge dr^{2}(e_{A},e_{B},e_{C},e_{D}).
\end{align*}

Using the sectional curvatures of $(P,g^{P})$ is a constant $\epsilon$, i.e., $\riem^{P}=\frac{\epsilon}{2}g^{P}\owedge g^{P}$, we have
\begin{align}
\lambda^{2}\riem^{P}(e_{A}^{*},e_{B}^{*},e_{C}^{*},e_{D}^{*})=\frac{\epsilon}{2\lambda^{2}}(\bar{g}-dr^{2})\owedge(\bar{g}-dr^{2})(e_{A},e_{B},e_{C},e_{D}).
\end{align}

Combining these together, we obtain
\begin{align*}
\overline{\riem}=\frac{\epsilon-\lambda'^{2}}{2\lambda^{2}}\bar{g}\owedge \bar{g}-\left(\frac{\epsilon-\lambda'^{2}}{\lambda^{2}}+\frac{\lambda''}{\lambda}\right)\bar{g}\owedge dr^{2}.
\end{align*}
\end{proof}

%%%%%%%%%%%%%%%%%%%%%%%%%
\section{An integral formula}
\label{intformula}

In this section, we obtain an integral formula by the divergence theorem, which is the key to the proof of our main results.

Let $x:M\rightarrow \bar{M}$ be an immersion of a closed orientable hypersurface $M^n$ into a warped product $\bar{M}^{n+1}=[0,\bar{r})\times_{\lambda} P^{n}$ endowed with a metric $\bar{g}=dr^{2}+\lambda^{2}(r)g^{P}$. Let $\nu$ be a normal vector field of $M$ and $h=(h_{ij})$ denote the second fundamental form with respect to an orthogonal frame $\{e_{1},...,e_{n}\}$ on $M$ defined by
 $$h_{ij}=\langle \nabla_{i}x,\nabla_{j}\nu \rangle.$$
The principal curvatures $\kappa=(\kappa_{1},...,\kappa_{n})$ are the eigenvalues of  $h$. 
Thus the $k$-th elementary symmetric polynomials of principal curvatures can be expressed as follows
\begin{equation*}
\sigma_{k}(\kappa(h))=\frac{1}{k!}\delta_{j_{1}...j_{k}}^{i_{1}...i_{k}}h_{i_{1}j_{1}}\cdots h_{i_{k}j_{k}},
\end{equation*}
where $\delta_{j_{1}...j_{k}}^{i_{1}...i_{n}}$ is the generalized Kronecker symbol.
Let $\sigma_{k;i}(\kappa)$ denote $\sigma_k(\kappa)$ with $\kappa_{i}=0$ and $\sigma_{k;ij}(\kappa)$,  with $i\neq j$, denote the symmetric function $\sigma_k(\kappa)$ with $\kappa_i=\kappa_j=0$.

The following proposition is from a standard calculation (see also \cite{Brendle-Eichmair, Kwong-Lee-Pyo}) and the proof is given for the completeness.

\begin{prop}\label{div}
Under an orthonormal frame such that $h_{ij}=\kappa_{i}\delta_{ij}$, we have the following equality
\begin{equation*}
\sum_{i}\nabla_{i}(\frac{\partial \sigma_{k}(h)}{\partial h_{ij}})=-\sum_{p\neq j}\bar{R}_{\nu pjp}\sigma_{k-2;jp}(\kappa)
\end{equation*}
for any fixed $j$ and $2\leq k\leq n$.
\end{prop}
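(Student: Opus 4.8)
The plan is to use the Codazzi equation to commute covariant derivatives of the second fundamental form, exploiting that the ambient space is not flat so that a curvature term appears. Working at a point $p\in M$ with an orthonormal frame $\{e_1,\dots,e_n\}$ diagonalizing $h$, I first recall that for the symmetric function $\sigma_k$ the partial derivatives satisfy
\begin{equation*}
\frac{\partial \sigma_k(h)}{\partial h_{ij}}\Big|_{p}=\sigma_{k-1;i}(\kappa)\,\delta_{ij},
\end{equation*}
so that the Newton-tensor-type object $T^{ij}_{(k-1)}:=\partial\sigma_k/\partial h_{ij}$ is also diagonal at $p$. Its divergence is
\begin{equation*}
\sum_i \nabla_i T^{ij}_{(k-1)}=\sum_i \frac{\partial^2 \sigma_k}{\partial h_{ij}\partial h_{pq}}\,\nabla_i h_{pq},
\end{equation*}
and the second derivatives of $\sigma_k$ at a diagonal point are well known: $\partial^2\sigma_k/\partial h_{ii}\partial h_{pp}=\sigma_{k-2;ip}(\kappa)$ for $i\neq p$, while the "off-diagonal" pieces are $\partial^2\sigma_k/\partial h_{ip}\partial h_{pi}=-\sigma_{k-2;ip}(\kappa)$ for $i\neq p$, and all remaining components vanish. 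Substituting and relabeling indices, the expression collapses to a combination of terms of the form $\sigma_{k-2;jp}(\kappa)\big(\nabla_p h_{jp}-\nabla_j h_{pp}\big)$ summed over $p\neq j$.

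The key input is then the Codazzi equation in the ambient manifold $\bar M$: for a hypersurface,
\begin{equation*}
\nabla_p h_{jp}-\nabla_j h_{pp}=\overline{\riem}(\nu,e_p,e_j,e_p)=\bar R_{\nu p j p}.
\end{equation*}
(One must be slightly careful with the sign convention for $\bar R_{\nu p j p}$ matching the one fixed by Proposition \ref{curv4}, but this is just bookkeeping.) Plugging this into the collapsed divergence expression immediately yields
\begin{equation*}
\sum_i \nabla_i\!\left(\frac{\partial\sigma_k(h)}{\partial h_{ij}}\right)=-\sum_{p\neq j}\bar R_{\nu p j p}\,\sigma_{k-2;jp}(\kappa),
\end{equation*}
which is the claimed formula. (A parallel computation appears in \cite{Brendle-Eichmair,Kwong-Lee-Pyo}.)

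The main technical obstacle — and the only place where any real care is needed — is keeping track of which second-order partials of $\sigma_k$ survive at a diagonal point and with what combinatorial coefficient, and ensuring that the "diagonal" contributions $\sum_i \sigma_{k-2;ij}(\kappa)\nabla_i h_{ij}$ pair correctly with the "off-diagonal" contributions so that only the antisymmetric combination $\nabla_p h_{jp}-\nabla_j h_{pp}$ remains (the fully symmetric part $\nabla_i h_{ij}$ with all indices distinct cancels, and the $i=j$ terms must be handled separately). Since $M$ itself has no intrinsic curvature obstruction here — everything is local and algebraic once Codazzi is invoked — there is no conceptual difficulty; the proof is a careful but routine identity manipulation, which is why the authors state it is "from a standard calculation."
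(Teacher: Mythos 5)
Your proof is correct, and it takes a genuinely different route from the one in the paper. You compute the divergence of the Newton tensor $T^{ij}=\partial\sigma_k/\partial h_{ij}$ directly via the chain rule and the explicit Hessian of $\sigma_k$ at a diagonal point ($\partial^2\sigma_k/\partial h_{ii}\partial h_{pp}=\sigma_{k-2;ip}$, $\partial^2\sigma_k/\partial h_{ip}\partial h_{pi}=-\sigma_{k-2;ip}$ for $i\neq p$, all else zero), which for fixed $j$ leaves exactly $\sum_{p\neq j}\sigma_{k-2;jp}\bigl(\nabla_j h_{pp}-\nabla_p h_{jp}\bigr)$; the Codazzi equation in the paper's convention gives $\nabla_p h_{jp}-\nabla_j h_{pp}=\bar R_{\nu pjp}$, so the signs do close up and yield the stated identity. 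The paper instead runs a generating-function argument: it sets $\tilde h=I+th$, uses $\sigma_n(I+th)=\sum_k t^k\sigma_k(h)$ together with $\partial\sigma_n(\tilde h)/\partial h_{ij}=t(\tilde h^{-1})_{ij}\sigma_n(\tilde h)$ and the derivative formula for $\tilde h^{-1}$, applies Codazzi once, and then compares coefficients of $t^k$. Your approach is more elementary and self-contained but requires the second-derivative formulas for $\sigma_k$ at a diagonal point (which you state correctly but do not derive); the paper's approach proves all $k$ simultaneously and sidesteps the Hessian computation at the cost of introducing the auxiliary parameter $t$. One small wording quibble: the terms $\nabla_i h_{pq}$ with all indices distinct do not "cancel" — they simply never appear, since the corresponding second derivatives of $\sigma_k$ vanish at a diagonal point.
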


\begin{proof}
Let $\tilde{h}=I+th$. Then,
\begin{equation}
\sigma_{n}(\tilde{h})=\sigma_{n}(I+th)=\sum_{k=0}^{n}t^{k}\sigma_{k}(h).
\end{equation}

Using
\begin{equation*}
\frac{\partial\sigma_{n}(\tilde{h})}{\partial h_{ij}}=t(\tilde{h}^{-1})_{ij}\sigma_{n}(\tilde{h}),
\end{equation*}
\begin{equation*}
\sum_{i=1}^{n}\nabla_{i}(\tilde{h}^{-1})_{ij}=-t(\tilde{h}^{-1})_{ip}(\tilde{h}^{-1})_{qj}\nabla_{i}h_{pq},
\end{equation*}
for arbitrary $t$ and the Codazzi equation
 $$\nabla_{i}h_{pq}=\nabla_{q}h_{pi}+\bar{R}_{\nu pqi},$$
 we have
\begin{equation}\label{tk}
\begin{aligned}
&t^{k}\nabla_{i}(\frac{\partial \sigma_{k}(h)}{\partial h_{ij}})=\nabla_{i}(\frac{\partial\sigma_{n}(\tilde{h})}{\partial h_{ij}})\\
&\qquad=t^{2}\sigma_{n}(\tilde{h})\left(-(\tilde{h}^{-1})_{ip}(\tilde{h}^{-1})_{qj}\nabla_{i}h_{pq}+(\tilde{h}^{-1})_{ij}(h^{-1})_{pq}\nabla_{i}h_{pq}\right)\\
&\qquad=t^{2}(\tilde{h}^{-1})_{pq}(\tilde{h}^{-1})_{ij}\sigma_{n}(\tilde{h})\left(-\nabla_{q}h_{pi}+\nabla_{i}h_{pq}\right)\\
&\qquad=t^{2}(\tilde{h}^{-1})_{pq}(\tilde{h}^{-1})_{ij}\sigma_{n}(\tilde{h})\bar{R}_{\nu pqi}.
\end{aligned}
\end{equation}

Now we choose a local orthonormal frame $\{e_1, \cdots, e_n\}$ such that $h_{ij}=\kappa_{i}\delta_{ij}$. Then
\begin{equation*}
(\tilde{h}^{-1})_{ij}=\frac{\delta_{ij}}{1+t\kappa_{i}}.
\end{equation*}

Thus,
\begin{align*}
&(\tilde{h}^{-1})_{pq}(\tilde{h}^{-1})_{ij}\sigma_{n}(\tilde{h})\bar{R}_{\nu pqi}=-\frac{\bar{R}_{\nu pjp}}{(1+t\kappa_{j})(1+t\kappa_{p})}\prod_{l=1}^{n}(1+t\kappa_{l})\\
&\qquad=-\bar{R}_{\nu pjp}\prod_{l\in\{1,...,n\}\backslash\{j,p\}}(1+t\kappa_{l})\\
&\qquad=-\bar{R}_{\nu pjp}\sum_{k=2}^{n}t^{k-2}\sigma_{k-2;jp}(\kappa).
\end{align*}

Combining with \eqref{tk}, we have
\begin{equation*}
\sum_{k=1}^{n}t^{k}\nabla_{i}(\frac{\partial \sigma_{k}(h)}{\partial h_{ij}})=-\bar{R}_{\nu pjp}\sum_{k=2}^{n}t^{k}\sigma_{k-2;jp}(\kappa).
\end{equation*}

Comparing the coefficients of $t^{k}$, we have
\begin{equation}
\sum_{i}\nabla_{i}(\frac{\partial \sigma_{k}(h)}{\partial h_{ij}})=-\sum_{p\neq j}\bar{R}_{\nu pjp}\sigma_{k-2;jp}(\kappa)
\end{equation}
for each $k\in\{2,...,n\}$.
\end{proof}

Denote $\eta=x^{*}(\int_{0}^{r}\lambda(s)ds)$ and $u=\langle \lambda\partial_{r},\nu \rangle$. We have the following integral formula.

\begin{lem}\label{key}
Suppose $x(M)$ is a closed hypersurface of $\bar{M}$. The following equality holds
\begin{align*}
\int_{M} \{&-(n-k)\langle \nabla\sigma_{k},\lambda\partial_{r} \rangle+((n-k)\sigma_{1}\sigma_{k}-n(k+1)\sigma_{k+1})u\\
&-n\bar{R}_{\nu pjp}\langle \lambda\partial_{r},e_{j} \rangle\sigma_{k-1;jp}\}d\mu=0.
\end{align*}
\end{lem}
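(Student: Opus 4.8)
The plan is to integrate the divergence of a well-chosen vector field over the closed manifold $M$ and invoke the divergence theorem, so that the boundary-free nature of $M$ forces the total integral to vanish. The natural candidate is $W^j = \eta_{,i}\,\frac{\partial\sigma_k}{\partial h_{ij}} + (\text{something involving } u)$, but a cleaner route is to start from the tangential vector field $Z = P^{ij}_{(k-1)}\,\nabla_i\eta\, e_j$, where $P_{(k-1)}$ denotes the $(k-1)$-th Newton tensor $\frac{\partial \sigma_k}{\partial h_{ij}}$ (so $\mathrm{tr}\,P_{(k-1)} = (n-k+1)\sigma_{k-1}$ and $P_{(k-1)}^{ij}h_{ij} = k\sigma_k$). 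First I would record the two basic facts that drive everything: $\nabla_i\eta = \langle \lambda\partial_r, e_i\rangle$ (from the definition $\eta = \int_0^r\lambda$), and the Hessian identity $\nabla_i\nabla_j\eta = \lambda'\,g_{ij} - u\,h_{ij}$, which follows from Proposition \ref{connect} (the warped-product connection formulas give $\bar\nabla(\int\lambda) = \lambda\partial_r$ and $\bar\nabla(\lambda\partial_r) = \lambda'\,\bar g$, then project onto $M$).

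The main computation is then $\divg Z = \nabla_i\left(P_{(k-1)}^{ij}\right)\nabla_j\eta + P_{(k-1)}^{ij}\nabla_i\nabla_j\eta$. For the second term, substitute the Hessian identity: $P_{(k-1)}^{ij}\nabla_i\nabla_j\eta = \lambda'\,\mathrm{tr}\,P_{(k-1)} - u\,P_{(k-1)}^{ij}h_{ij} = (n-k+1)\lambda'\sigma_{k-1} - k\,\sigma_k\,u$. Here I would want to re-express $\lambda'$ in terms of the quantities appearing in the statement; using $\nabla_i(\lambda\langle\partial_r,\nu\rangle)$-type identities or, more directly, the relation obtained by tracing, one rewrites $(n-k+1)\lambda'\sigma_{k-1}$ so that after multiplying through by $n/(k)$ (to convert $\sigma$-normalizations, since the statement is stated with $\sigma_k$ not $H_k$, some bookkeeping constant $n(k+1)\sigma_{k+1}$ versus $(n-k)\sigma_1\sigma_k$ appears via Newton's inequality identity $P_{(k-1)}^{ij}h_{ip}h_{jp}$-type contractions). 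For the first term, this is exactly where Proposition \ref{div} enters: $\nabla_i\left(\frac{\partial\sigma_k}{\partial h_{ij}}\right) = -\sum_{p\neq j}\bar R_{\nu p j p}\,\sigma_{k-2;jp}$, so $\nabla_i(P_{(k-1)}^{ij})\nabla_j\eta = -\bar R_{\nu pjp}\,\langle\lambda\partial_r, e_j\rangle\,\sigma_{k-2;jp}$, which (up to the normalization factor $n$) is the third term in the Lemma.

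Assembling these pieces, $\int_M \divg Z\,d\mu = 0$ yields an identity; multiplying by the appropriate constant to match normalizations and using the polynomial identities relating $\sigma_1\sigma_k$, $\sigma_{k+1}$, $\sigma_{k-1}$ and the contractions $P_{(k-1)}^{ij}h_{ij}$, $P_{(k-1)}^{ij}h_{ip}h_{jp} = \sigma_1\sigma_k - (k+1)\sigma_{k+1}$ gives precisely the displayed formula. The one subtlety I would be careful about is the first term $-(n-k)\langle\nabla\sigma_k,\lambda\partial_r\rangle$: this does not come from $Z$ directly but must be produced, either by instead integrating $Z' = \eta\,(\text{divergence-type quantity})$ and moving a derivative off $\eta$ via integration by parts, or by adding to $Z$ a corrective term $-(n-k)\sigma_k\,\nabla\eta$-flavored piece and noting $\divg(\sigma_k\nabla\eta) = \langle\nabla\sigma_k,\nabla\eta\rangle + \sigma_k\Delta\eta$ with $\Delta\eta = n\lambda' - \sigma_1 u$. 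The main obstacle is therefore purely organizational — keeping all three combinatorial normalization constants ($n$, $n-k$, $k+1$) consistent while passing between $\sigma_k$, Newton tensors, and their traces — rather than any genuine analytic difficulty; the geometry is entirely contained in the Hessian formula for $\eta$ and Proposition \ref{div}.
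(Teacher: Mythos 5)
Your overall strategy --- apply the divergence theorem on the closed manifold $M$ to a Newton-tensor-based vector field, using $\nabla_i\nabla_j\eta=\lambda' g_{ij}-u\,h_{ij}$ and Proposition \ref{div} --- is the right one and is indeed what the paper does. But the specific vector field you settle on, $Z=P_{(k-1)}^{ij}\nabla_i\eta\, e_j$, does not produce this Lemma; it produces the Hsiung--Minkowski formula
\begin{equation*}
\int_M\Bigl((n-k+1)\lambda'\sigma_{k-1}-k\sigma_k u-\bar R_{\nu pjp}\langle\lambda\partial_r,e_j\rangle\,\sigma_{k-2;jp}\Bigr)\,d\mu=0,
\end{equation*}
which differs from the statement in three essential ways: its curvature term carries $\sigma_{k-2;jp}$ rather than $\sigma_{k-1;jp}$; it contains no $\sigma_{k+1}u$ term; and it retains a $\lambda'\sigma_{k-1}$ term, whereas the Lemma has no $\lambda'$ term at all. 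Your proposed repairs cannot close this gap. Adding a multiple of $\sigma_k\nabla\eta$ (whose divergence is $\langle\nabla\sigma_k,\lambda\partial_r\rangle+\sigma_k(n\lambda'-\sigma_1 u)$) supplies the gradient term and a $\sigma_1\sigma_k u$ term, but cannot cancel $\lambda'\sigma_{k-1}$ against $\lambda'\sigma_k$ (different orders in $\kappa$), cannot create $\sigma_{k+1}$, and cannot shift $\sigma_{k-2;jp}$ to $\sigma_{k-1;jp}$. The identity $P_{(k-1)}^{ij}h_{ip}h_{jp}=\sigma_1\sigma_k-(k+1)\sigma_{k+1}$ that you correctly cite as the source of the $\sigma_{k+1}$ term simply never appears in $\divg Z$: that contraction only arises from a \emph{second} derivative of the support function $u$, not of $\eta$.

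The missing ingredient is your own first guess, which you abandoned: the vector field must contain the piece $-n\,\frac{\partial\sigma_k}{\partial h_{ij}}\nabla_j u\,e_i$. The paper integrates $\nabla_i\bigl(k\sigma_k\nabla_i\eta-n\frac{\partial\sigma_k}{\partial h_{ij}}\nabla_j u\bigr)$. Since $\nabla_j u=\langle\lambda\partial_r,h_{jl}e_l\rangle$ and $\nabla_i\nabla_j u=-\lambda'h_{ij}-\langle\lambda\partial_r,\nabla_i h_{jl}\,e_l\rangle+h_{jl}h_{li}u$, the term $-nP_{(k-1)}^{ij}\nabla_i\nabla_j u$ does three jobs at once: the $-\lambda'h_{ij}$ part contributes $-nk\sigma_k\lambda'$, exactly cancelling $k\sigma_k\cdot n\lambda'$ from $k\sigma_k\Delta\eta$; the $h_{jl}h_{li}u$ part yields $n(\sigma_1\sigma_k-(k+1)\sigma_{k+1})u$; and the $\nabla_i h_{jl}$ part, after Codazzi, gives $-n\langle\nabla\sigma_k,\lambda\partial_r\rangle$ (combining with $k\langle\nabla\sigma_k,\lambda\partial_r\rangle$ to give the coefficient $-(n-k)$) plus a curvature term. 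Finally, because $\nabla_i(P_{(k-1)}^{ij})$ is now contracted with $\langle\lambda\partial_r,h_{jl}e_l\rangle$ (an extra factor of $h$), Proposition \ref{div} produces $\kappa_j\sigma_{k-2;jp}$, which combines with the $\sigma_{k-1;p}$ from the Codazzi curvature term via $\sigma_{k-1;p}-\kappa_j\sigma_{k-2;jp}=\sigma_{k-1;jp}$ to give the correct $\sigma_{k-1;jp}$. None of these mechanisms is available in your setup, so as written the proof does not go through.
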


\begin{proof}
From a straightforward calculation, we have
\begin{align*}
&\nabla_{i}(k\sigma_{k}\nabla_{i}\eta-n\frac{\partial\sigma_{k}}{\partial h_{ij}}\nabla_{j}u)=k\langle \nabla\sigma_{k},\nabla\eta \rangle+k\sigma_{k}\Delta\eta-n\nabla_{i}(\frac{\partial\sigma_{k}}{\partial h_{ij}})\nabla_{j}u-n\frac{\partial\sigma_{k}}{\partial h_{ij}}\nabla_{i}\nabla_{j}u\\
&\qquad=k\langle \nabla\sigma_{k},\nabla\eta \rangle+k\sigma_{k}(n\lambda'-\sigma_{1}u)-n\nabla_{i}(\frac{\partial\sigma_{k}}{\partial h_{ij}})\langle \lambda\partial_{r},h_{jl}e_{l} \rangle\\
&\qquad\quad+n\frac{\partial\sigma_{k}}{\partial h_{ij}}(-\lambda'h_{ij}-\langle \lambda\partial_{r},h_{jli}e_{l} \rangle+h_{jl}h_{li}u)\\
&\qquad=k\langle \nabla\sigma_{k},\lambda\partial_{r} \rangle+(n-k)\sigma_{k}\sigma_{1}u-n(k+1)\sigma_{k+1}u-n\nabla_{i}(\frac{\partial\sigma_{k}}{\partial h_{ij}})\langle \lambda\partial_{r},h_{jl}e_{l} \rangle\\
&\qquad\quad-n\langle \lambda\partial_{r},\nabla \sigma_{k} \rangle-n\frac{\partial\sigma_{k}}{\partial h_{ij}}\langle \lambda\partial_{r},e_{l} \rangle\bar{R}_{\nu jli}.
\end{align*}

Using Proposition \ref{div}, we know that
\begin{align*}
&-n\nabla_{i}(\frac{\partial\sigma_{k}}{\partial h_{ij}})\langle \lambda\partial_{r},h_{jl}e_{l} \rangle-n\frac{\partial\sigma_{k}}{\partial h_{ij}}\langle \lambda\partial_{r},e_{l} \rangle\bar{R}_{\nu jli}\\
&\qquad=n\bar{R}_{\nu pjp}\sigma_{k-2;jp}\langle \lambda\partial_{r},\kappa_{j}e_{j} \rangle-n\sigma_{k-1;i}\langle \lambda\partial_{r},e_{l} \rangle\bar{R}_{\nu ili}\\
&\qquad=n\bar{R}_{\nu pjp}\langle \lambda\partial_{r},e_{j} \rangle(\sigma_{k-2;jp}\kappa_{j}-\sigma_{k-1;p})\\
&\qquad=-n\bar{R}_{\nu pjp}\langle \lambda\partial_{r},e_{j} \rangle\sigma_{k-1;jp}.
\end{align*}

Combining these equalities and using divergence theorem, we finish the proof.
\end{proof}

%%%%%%%%%%%%%%%%%%%%%
\section{Proof of the main theorems}
\label{proof}

\begin{proof}[Proof of Theorem \ref{nablaH}]
Using Lemma \ref{key} for $k=1$, we know that
\begin{equation}\label{k=1}
\int_{M}\{-n(n-1)\langle \nabla H,\lambda\partial_{r} \rangle+((n-1)\sigma_{1}^{2}-2n\sigma_{2})u-n\overline{\ric}(\nu,\lambda\partial_{r}^{\top})\}d\mu=0,
\end{equation}
where $\partial_{r}^{\top}$ deontes the tangent part of $\partial_{r}$.

From Newton inequality and $x(M)$ is star-shaped ($\langle \partial_{r},\nu \rangle >0$), we have
\begin{equation}\label{sig2}
((n-1)\sigma_{1}^{2}-2n\sigma_{2})u\geq 0.
\end{equation}
And the equality of \eqref{sig2} occurs if and only $\kappa_{1}=...=\kappa_{n}$.

Let $\nu^{P}=\nu-\langle \nu,\partial_{r} \rangle\partial_{r}$. Since
\begin{align*}
&\overline{\ric}(\nu,\lambda\partial_{r}^{\top})=\overline{\ric}(\nu,\lambda\partial_{r})-\overline{\ric}(\nu,\nu)u\\
&\qquad=u\left(-n\frac{\lambda''}{\lambda}+n\frac{\lambda''}{\lambda}\langle \partial_{r},\nu \rangle^{2}-\ric^{P}(\nu^{P},\nu^{P})+\left(\frac{\lambda''}{\lambda}+(n-1)\frac{\lambda'^{2}}{\lambda^{2}}\right)\abs{\nu^{P}}^{2}\right)\\
&\qquad=-u\left(\ric^{P}(\nu^{P},\nu^{P})+(n-1)\left(\frac{\lambda''}{\lambda}-\frac{\lambda'^{2}}{\lambda^{2}}\right)\abs{\nu^{P}}^{2}\right)\\
&\qquad=-u\left(\ric^{P}(\nu^{P},\nu^{P})+(n-1)\left(\lambda\lambda''-\lambda'^{2}\right)g^{P}(\nu^{P},\nu^{P})\right),
\end{align*}
we know $\overline{\ric}(\nu,\lambda\partial_{r}^{\top})\leq 0$ by assumption.

Combining these estimates with $\langle \nabla H,\partial_{r} \rangle\leq 0$, we obtain the left hand side \eqref{k=1} is nonnegative. These implies the inequalities are  actually equalities at any point of $M$. Thus, $x(M)$ is totally umbilic.
\end{proof}

\begin{rem}
In the previous proof, if $\ric^{P}>(n-1)(\lambda'^{2}-\lambda\lambda'')g^{P}$, we also obtain $\partial_{r}^{\top}=0$. This means $\partial_{r}$ is the normal vector of $x(M)$ which implies $x(M)$ is a slice.
\end{rem}

\begin{proof}[Proof of Theorem \ref{nablaHk}]
Under the assumption, it follows from \eqref{eq:Rm} that
\begin{equation*}
\bar{R}_{\nu pjp}=-\left(\frac{\lambda''}{\lambda}+\frac{\epsilon-\lambda'^{2}}{\lambda^{2}}\right)\langle \partial_{r},\nu \rangle \langle \partial_{r},e_{j} \rangle,
\end{equation*}
for any fixed $p$ and $j\neq p$. Using 
\begin{equation*}
\sum_{p\neq j}\sigma_{k-1;jp}=(n-k)\sigma_{k-1;j},
\end{equation*}
we know
\begin{align*}
&-\bar{R}_{\nu pjp}\langle \lambda\partial_{r},e_{j} \rangle\sigma_{k-1;jp}=u\left(\frac{\lambda''}{\lambda}+\frac{\epsilon-\lambda'^{2}}{\lambda^{2}}\right)\langle \partial_{r},e_{j} \rangle^{2}\sigma_{k-1;jp}\\
&=(n-k)u\left(\frac{\lambda''}{\lambda}+\frac{\epsilon-\lambda'^{2}}{\lambda^{2}}\right)\langle \partial_{r},e_{j} \rangle^{2}\sigma_{k-1;j}\geq 0,
\end{align*}
where the inequality follows from that $\frac{\lambda''}{\lambda}+\frac{\epsilon-\lambda'^{2}}{\lambda^{2}}\geq 0$, $x(M)$ is star-shaped and $k$-convex.

Similar to the previous proof, we know
\begin{equation*}
-(n-k)\langle \nabla\sigma_{k},\lambda\partial_{r} \rangle\geq 0
\end{equation*}
and
\begin{equation*}
((n-k)\sigma_{1}\sigma_{k}-n(k+1)\sigma_{k+1})u\geq 0.
\end{equation*}
But Lemma \ref{key} shows that the integral of these terms are zero. Thus, we know all these inequalities are actually equalities which implies $x(M)$ is totally umbilic. The rest of the proof is similar to the previous one.
\end{proof}

\begin{proof}[Proof of Corollary \ref{corHkconst}]
Since $H_{k}=\text{constant}$, we know $\nabla\sigma_{k}=0$. From the proof of Theorem \ref{nablaHk},
\begin{equation*}
-\bar{R}_{\nu pjp}\langle \lambda\partial_{r},e_{j} \rangle\sigma_{k-1;jp}=0.
\end{equation*}
Combining with the condition \eqref{eq:C4}, we obtain $\abs{\partial_{r}^{\top}}=0$, which implies $x(M)$ is a slice.
\end{proof}

\begin{proof}[Proof of Corollary \ref{corHphi} and Corollary \ref{corHkphi}]

From $H_{k}=\phi(r)$, by direct calculation, we have
\begin{equation*}
\langle \nabla H_{k},\partial_{r} \rangle=\phi'\abs{\partial_{r}^{\top}}^{2}.
\end{equation*}

Since $\Phi(r)$ is non-increasing, we know $\phi'(r)\leq 0$. Thus, $$\langle \nabla H_{k},\partial_{r} \rangle\leq 0.$$

By Theorem \ref{nablaH} or Theorem \ref{nablaHk}, we finish the proof.
\end{proof}

\begin{proof}[Proof of Corollary \ref{corHalph} and Corollary \ref{corHkalph}]

From $H_{k}^{-\alpha}=u$, we have
\begin{align*}
\langle \nabla H_{k},\partial_{r} \rangle=-\frac{1}{\alpha}u^{-\frac{1}{\alpha}-1}\langle \nabla u,\partial_{r} \rangle=-\frac{1}{\alpha}u^{-\frac{1}{\alpha}-1}\lambda\kappa_{i}\langle e_{i},\partial_{r} \rangle^{2}.
\end{align*}

Since $x(M)$ is strictly convex, from $u=H_{k}^{-\alpha}$, we know $u>0$. By $\alpha>0$, $$\langle \nabla H_{k},\partial_{r} \rangle\leq 0.$$

From the proof of Theorem \ref{nablaH} or Theorem \ref{nablaHk}, we know $$\langle \nabla H_{k},\partial_{r} \rangle=0.$$
This implies $\partial_{r}$ is the normal vector of $x(M)$, which means $x(M)$ is a slice.
 
\end{proof}

%%%%%%%%%%%%%%%%%%%%%%%%%%%%%%%%%%%%%%%%%%%
\section{Proof of Theorem \ref{Hklambda}}
\label{embed}

In this section we give the proof of Theorem \ref{Hklambda}.
By Lemma 2.3 in \cite{Li-Wei-Xiong}, we know $x(M)$ is $k$-convex from $H_{k}>0$. Thus, Maclaurin's inequality 
\begin{equation}\label{Mac}
H_{k}^{\frac{1}{k}}\leq H_{k-1}^{\frac{1}{k-1}}
\end{equation}
holds.
From Brendle's Heintze-Karcher type inequality established in \cite{Brendle13}
\begin{align*}
\int_{M}ud\mu \leq \int_{M}\frac{\lambda'}{H_{1}}d\mu
\end{align*}
and the Minkowski type formula (see \cite{Brendle-Eichmair})
\begin{align*}
\int_{M}H_{k}ud\mu\geq \int_{M}H_{k-1}\lambda'd\mu,
\end{align*}
combining with Maclaurin's inequality and $H_{k}^{-\alpha}\lambda'=u$, we obtain
\begin{equation}\label{Hkalf}
\int_{M}H_{k}^{-\alpha}\lambda'd\mu \leq \int_{M}\frac{\lambda'}{H_{1}}d\mu \leq \int_{M}H_{k}^{-\frac{1}{k}}\lambda'd\mu
\end{equation}
and
\begin{equation}\label{Min}
\int_{M}H_{k}^{1-\alpha}\lambda'd\mu\geq \int_{M}H_{k-1}\lambda'd\mu.
\end{equation}

By H\"{o}lder's inequality, Maclaurin's inequality \eqref{Mac} and \eqref{Min}, we have
\begin{align*}
\int_{M}H_{k}^{-\frac{1}{k}}\lambda'd\mu &\leq \left(\int_{M}H_{k-1}^{1-p}H_{k}^{-\frac{p}{k}}\lambda'd\mu\right)^{\frac{1}{p}}\left(\int_{M}H_{k-1}\lambda'd\mu\right)^{\frac{p-1}{p}}\\
&\leq \left(\int_{M}H_{k}^{\frac{-kp-1+k}{k}}\lambda'd\mu\right)^{\frac{1}{p}}\left(\int_{M}H_{k}^{1-\alpha}\lambda'd\mu\right)^{\frac{p-1}{p}}.
\end{align*}

Choose $p$ such that $p-1+\frac{1}{k}=\alpha$, then $p=\frac{k\alpha+k-1}{k}$. Notice that $p\geq 1$ implies $\alpha\geq \frac{1}{k}$.
The above inequality becomes
\begin{equation}\label{Hk}
\int_{M}H_{k}^{-\frac{1}{k}}\lambda'd\mu \leq \left(\int_{M}H_{k}^{-\alpha}\lambda'd\mu\right)^{\frac{k}{k\alpha+k-1}}\left(\int_{M}H_{k}^{1-\alpha}\lambda'd\mu\right)^{\frac{k\alpha-1}{k\alpha+k-1}}.
\end{equation}

Using H\"{o}lder's inequality, \eqref{Mac} and \eqref{Min} again as before, we obtain
\begin{align*}
\int_{M}H_{k}^{1-\alpha}\lambda'd\mu &\leq \left(\int_{M}H_{k}^{\frac{-1+k+p-pk\alpha}{k}}\lambda'd\mu\right)^{\frac{1}{p}}\left(\int_{M}H_{k}^{1-\alpha}\lambda'd\mu\right)^{\frac{p-1}{p}}.
\end{align*}
Equivalently,
\begin{align*}
\int_{M}H_{k}^{1-\alpha}\lambda'd\mu &\leq \int_{M}H_{k}^{\frac{-1+k+p-pk\alpha}{k}}\lambda'd\mu.
\end{align*}

Now we choose $p$ such that $\frac{-1+k+p-pk\alpha}{k}=-\alpha$. Then $p=\frac{k\alpha-1+k}{k\alpha-1}$. Thus, the above inequality is
\begin{align*}
\int_{M}H_{k}^{1-\alpha}\lambda'd\mu &\leq \int_{M}H_{k}^{-\alpha}\lambda'd\mu.
\end{align*}

Substituting the above inequality into \eqref{Hk}, we obtain
\begin{equation*}
\int_{M}H_{k}^{-\frac{1}{k}}\lambda'd\mu \leq \int_{M}H_{k}^{-\alpha}\lambda'd\mu.
\end{equation*}

Combining the above inequality with \eqref{Hkalf}, we know that the equality of the Heintze-Karcher type inequality occurs. As \cite{Brendle13}, we finish the proof.

%%%%%%%%%%%%%%%%%%%%%%%%%%%%%%%%

\appendix

\section{}%{Appendix}
\label{variation}

Let $ \bar{M}^{n+1}$ be an oriented Riemannian manifold and let $x:M^{n}\rightarrow \bar{M}^{n+1}$ be an immersion of a closed smooth $n$-dimensional manifold $M$ into $\bar{M}^{n+1}$. 
Suppose that a smooth map $X:(-\epsilon,\epsilon)\times M\rightarrow \bar{M}$ is a normal variation satisfying
\begin{equation*}
\frac{\partial}{\partial t}X=-f\nu,
\end{equation*}
where $f$ is a smooth function on $M$ and $\nu$ is the unit normal of $X(t, M)$.

We introduce the weighted volume $V: (-\epsilon, \epsilon)\rightarrow \mathbb{R}$ by
\begin{equation}
V(t)=\int_{[0,t]\times M}X^{*}(e^{\Psi}d\bar{\mu}),
\end{equation}
where $d\bar{\mu}$ is a standard volume element of $\bar{M}$ and $\Psi$ is a smooth function on $\bar{M}$.
Thus $V(t)$ represents the (oriented) weighted volume sweeping by $M$ on the time interval $[0,t)$.
By the same calculations as in \cite{Barbosa-doCarmo-Eschenburg}, we have
\begin{equation}
V'(t)=\int_{M}fX^{*}(e^{\Psi})d\mu,
\end{equation}
where $d\mu$ is the volume element of $M$ with respect to the induced metric.

\begin{defn}
A variation of $x:M\rightarrow\bar{M}$ is called weighted volume-preserving variation if $V(t)\equiv0$.
\end{defn}

Denote
\begin{equation}
J(t)=A(t)+nH_{0}V(t),
\end{equation}
where $A(t)=\int_{M}d\mu$ and $H_{0}=\frac{\int_{M}Hd\mu}{\int_{M}X^{*}(e^{\Psi})d\mu}$.

Then
\begin{align*}
J'(0)=\int_{M} nf(-H+H_{0}e^{\psi}) d\mu,
\end{align*}
where $\psi$ denotes $x^{*}(\Psi)$.

\begin{prop}\label{critical}
The following three statements are equivalent:
\begin{enumerate}[(i)]
\item The mean curvature of $x(M)$ satisfies $H=Ce^{\psi}$ for a constant $C$.
\item For all weighted volume-preserving variations, $A'(0)=0$.
\item For arbitrary variations, $J'(0)=0$.
\end{enumerate}
\end{prop}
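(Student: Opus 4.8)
The plan is to run the classical Lagrange-multiplier argument of Barbosa--do~Carmo--Eschenburg \cite{Barbosa-doCarmo-Eschenburg}, adapted to the weighted volume. The two identities already recorded above are the backbone: from $J(t)=A(t)+nH_{0}V(t)$ and $V'(t)=\int_{M}fX^{*}(e^{\Psi})\,d\mu$ one gets $J'(0)=\int_{M}nf(-H+H_{0}e^{\psi})\,d\mu$ and hence $A'(0)=J'(0)-nH_{0}V'(0)=-n\int_{M}fH\,d\mu$; moreover $H_{0}$ is by definition the $e^{\psi}$-weighted average $\int_{M}H\,d\mu\big/\int_{M}e^{\psi}\,d\mu$ of the mean curvature. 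I would prove the cycle (i)$\Rightarrow$(iii)$\Rightarrow$(ii)$\Rightarrow$(i); only the last implication requires work.

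For (i)$\Rightarrow$(iii): if $H=Ce^{\psi}$ then $H_{0}=C\int_{M}e^{\psi}\,d\mu\big/\int_{M}e^{\psi}\,d\mu=C$, so the integrand $-H+H_{0}e^{\psi}$ vanishes identically and $J'(0)=0$ for every variation. For (iii)$\Rightarrow$(ii): a weighted volume-preserving variation has $V(t)\equiv0$, so $V'(0)=0$ and thus $A'(0)=J'(0)-nH_{0}V'(0)=0$ by (iii). (These two steps also give (i)$\Rightarrow$(ii) directly, via $A'(0)=-nC\int_{M}fe^{\psi}\,d\mu=-nCV'(0)=0$.)

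The substantive implication is (ii)$\Rightarrow$(i). First one shows that every $f\in C^{\infty}(M)$ with $\int_{M}fe^{\psi}\,d\mu=0$ arises as the normal speed of some weighted volume-preserving variation: starting from a two-parameter variation $X(t,s,\cdot)$ with $\partial_{t}X|_{0}=-f\nu$ and $\partial_{s}X|_{0}=-\nu$, one has $V(0,0)=0$, $\partial_{s}V(0,0)=\int_{M}e^{\psi}\,d\mu>0$ and $\partial_{t}V(0,0)=\int_{M}fe^{\psi}\,d\mu=0$, so the implicit function theorem produces a smooth $s=s(t)$ with $s(0)=0$, $s'(0)=0$ and $V(t,s(t))\equiv0$; then $Y(t,\cdot)=X(t,s(t),\cdot)$ is the desired variation, with $\partial_{t}Y|_{0}=-f\nu$. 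Feeding this into (ii) gives $\int_{M}fH\,d\mu=0$ for every $f$ with $\int_{M}fe^{\psi}\,d\mu=0$, i.e.\ $H$ is $L^{2}(M,d\mu)$-orthogonal to the orthogonal complement of $e^{\psi}$, which forces $H=Ce^{\psi}$ for a constant $C$; since both sides are smooth this holds pointwise, giving (i).

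The main obstacle is precisely the construction of the admissible variation in the last step --- checking that the implicit function theorem applies and that the reparametrization $s(t)$ is smooth with $s'(0)=0$, so that the prescribed mean-zero velocity field is genuinely realized by a weighted volume-preserving variation. This is exactly the point handled in \cite{Barbosa-doCarmo-Eschenburg} (and \cite{Barbosa-doCarmo}), so one may either reproduce that argument or cite it; everything else is the fundamental lemma of the calculus of variations together with the elementary identities above.
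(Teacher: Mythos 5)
Your proof is correct and follows essentially the same route as the paper: the cycle $(i)\Rightarrow(iii)\Rightarrow(ii)\Rightarrow(i)$, with the Barbosa--do Carmo--Eschenburg Lagrange-multiplier argument for the substantive implication. The only difference is cosmetic: the paper tests (ii) against the single admissible function $f=-He^{-\psi}+H_{0}$, turning $J'(0)=0$ into $\int_{M}n(-He^{-\psi}+H_{0})^{2}e^{\psi}\,d\mu=0$ and concluding at once, whereas you run the full orthogonality/fundamental-lemma argument; your explicit implicit-function-theorem construction of the weighted volume-preserving variation realizing a given mean-zero speed is a detail the paper leaves implicit.
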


\begin{proof}
It is easy to check $(i)\Rightarrow (iii)$ and $(iii)\Rightarrow (ii)$. Now, we show $(ii)\Rightarrow (i)$. We can choose $f=-He^{-\psi}+H_{0}$ since $\int_{M}fe^{\psi}d\mu=0$. From $$0=J'(0)=\int_{M}n(-He^{-\psi}+H_{0})^{2}e^{\psi}d\mu,$$ we know $-He^{-\psi}+H_{0}=0$. Thus, we finish the proof.
\end{proof}

\end{document}